 \def\rr{\mathbb{R}}
\def\zz{\mathbb{Z}}
\def\nn{\mathbb{N}}
\def\cf{\mathcal{F}}
\def\cs{\mathcal{S}}
\def\cc{\mathbb{C}}
\def\supp{{\rm{\ supp\ }}}
\def\rn{\mathbb{R}^n}
\def\zn{\mathbb{Z}^n}
\def\les{\lesssim}
\newtheorem{thm}{Theorem}[section]
\newtheorem{lem}[thm]{Lemma}
\newtheorem{prop}[thm]{Proposition}
\newtheorem{cor}[thm]{Corollary}
\newtheorem{defn}[thm]{Definition}
\def\XXint#1#2#3{{
\setbox0=\hbox{$#1{#2#3}{\int}$}
\vcenter{\hbox{$#2#3$}}\kern-.5\wd0}}
\numberwithin{equation}{section}
\begin{document}
\title{Embedding and Duality of  Matrix-Weighted Modulation Spaces \footnotetext{$\ast$ The corresponding author J. S. Xu  jingshixu@126.com\\
The work is supported by the National Natural Science Foundation of China (Grant No. 12161022) and the Science and Technology Project of Guangxi (Guike AD23023002).}}
\author{Shengrong Wang\textsuperscript{a}, Pengfei Guo\textsuperscript{a}, Jingshi Xu\textsuperscript{b,c,d*}\\
{\scriptsize  \textsuperscript{a}School of Mathematics and Statistics, Hainan Normal University, Haikou, 571158, China}\\
{\scriptsize  \textsuperscript{b}School of Mathematics and Computing Science, Guilin University of Electronic Technology, Guilin 541004, China} \\
{\scriptsize  \textsuperscript{c} Center for Applied Mathematics of Guangxi (GUET), Guilin 541004, China}\\
{\scriptsize  \textsuperscript{d}Guangxi Colleges and Universities Key Laboratory of Data Analysis and Computation, Guilin 541004, China}}
\date{}

\maketitle

{\bf Abstract.} In this paper, we give an approximation characterization,  embedding properties and the duality of matrix weighted modulation spaces.

 {\bf Key words and phrases.} matrix weight, modulation space, embedding, duality

{\bf Mathematics Subject Classification (2020).} 46E35

\section{Introduction}
Recent decades, matrix weighted function spaces have attracted many author's attention.
Indeed, Treil and Volberg \cite{mwntv1} introduced the Muckenhoupt $\mathcal{A}_2$ matrix weight and generalized the Hunt-Muckenhoutt-Wheeden theorem to the vector-valued case.
Nazarov and Treil then generalized the results to matrix $\mathcal{A}_p$ weights for $p \in (1,\infty)$, and Volberg extended the theory of weighted norm inequalities on $L^p$ to the case of vector-valued functions in \cite{mwntv3}.
Goldberg \cite{mwg1} showed that the matrix $\mathcal{A}_p$ condition leads to $L^p$-boundedness of the Hardy-Littlewood
maximal operator, and used this estimate to establish a bound for the weighted $L^p$ norm of singular integral operators. When $W\in \mathcal{A}_p$,
Roudenko \cite{mwr1} showed that the dual of $\dot{B}^\alpha_{p,q}(W)$ can be identified with $\dot{B}^{-\alpha}_{p^{\prime},q^{\prime}}(W^{-p^{\prime}/p})$ for $\alpha \in \rr$, $0<q<\infty$ and $1< p<\infty$, where $q'=q/(q-1)$ if $q\in (1,\infty)$ and $q'=1$ if $q\in (0,1]$.
Cruz-Uribe, Isralowitz and Moen \cite{mcim1} extended the theory of two weight, $\mathcal{A}_p$ bump conditions to the setting of matrix weights and proved two matrix weight inequalities for
fractional maximal operators, fractional and singular integrals, sparse operators and averaging operators.
Wang, Yang and Zhang \cite{wyz-1} characterized the matrix-weighted Triebel-Lizorkin space $\dot{F}^{\alpha,q}_p(W)$ by the Peetre maxima function, the Lusin area function, and the Littlewood-Paley $g^\ast_\lambda$ function.
As an application, the boundedness of the Fourier multiplier in the matrix-weighted Triebel-Lizorkin space is given.
Bu, Yang and Yuan \cite{byy-1} introduced homogeneous $(\mathcal{X},d,\mu)$ matrix-weighted Besov spaces in the sense of Coifman and Weiss, and proved that matrix-weighted Besov spaces are independent of the approximation of exponential decay identities and the choice of distribution spaces.
Moreover, they obtained the wavelet characterization and molecular characterization of the matrix-weighted Besov space, the boundedness of the almost diagonal operators on the matrix-weighted sequence Besov spaces, and the boundedness of the Calder\'{o}n-Zygmund operator on  matrix-weighted Besov spaces.
Bu, Hyt\"{o}nen, Yang and Yuan \cite{bhyy-1,bhyy-2,bhyy-3,bhyy-4} introduced the new concept of $\mathcal{A}_p$ dimensions of matrix weights and delved into their properties. Then they introduced spaces $\dot{B}^{s,\tau}_{p,q}(W)$ and $\dot{F}^{s,\tau}_{p,q}(W)$, and obtained their characterizations by $\varphi$-transform, molecule and wavelet, and the optimal boundedness of the pseudo-differential operators and the Calder\'{o}n-Zygmund operators on these spaces.

On the other hand, since modulation spaces were introduced by Feichtinger \cite{mwf1} in 1983, they have also attracted many author's attention.
In fact, Kobayashi and Sugimoto \cite{mwmm1} clearly determined the inclusion relationship between $L^p$-Sobolev space and modulation space.
Zhao, Gao and Guo \cite{zgg1} gave the optimal embedding relations between local Hardy space and $\alpha$-modulation spaces.
Sawano \cite{mws1} presented a natural extension of the modulation spaces $M^s_{p,q}(w)$ with $w \in A^{\rm loc}_\infty$, and investigated their atomic and molecular decomposition as well as some elementary properties.
Guo, Fan and Zhao \cite{mwgfz1} considered the embedding relations between any two $\alpha$-modulation spaces.
Sawano \cite{mws2} considered atomic decomposition of $M^s_{p,q}$ with $0<p,q \leq \infty$, $s \in \rr$.
Han and Wang \cite{mwhw1} considered some fundamental properties including dual spaces, complex interpolations of $\alpha$-modulation spaces $M^{s,\alpha}_{p,q}$ with $0 < p,q \leq \infty$, and obtained necessary and sufficient conditions for the scaling property and the inclusions between $\alpha_1$-modulation and $\alpha_2$-modulation spaces.
Chen, Lu and Wang \cite{mwclw1} studied the embedding properties of the scaling limit of the modulation spaces, including the homogeneous case and non-homogeneous case.
Recently, in \cite{nm-1} Nielsen introduced matrix weighted $\alpha$-modulation spaces $M^{s,\alpha}_{p,q}(W)$ and discrete matrix weighted $\alpha$-modulation spaces $m^{s,\alpha}_{p,q}(W)$, and proved their equivalence by using an adaptive compact frame.
Then, the boundedness of almost diagonal operators on these spaces was given, and then the molecular characterization was given.
The boundedness of Fourier multipliers on matrix weighted $\alpha$-modulation spaces $M^{s,\alpha}_{p,q}(W)$ was also given in \cite{nm-1}.

Inspirited by the above mentioned works, in this paper, we consider an approximate characterization,  the  embedding properties and the duality of matrix weighted modulation spaces.
The plan of the paper is as follows. In Section \ref{mwm-s1}, we collect some notations.
In Section \ref{mwm-s2}, we give the connection between averaging matrix-weighted modulation spaces and matrix-weighted modulation spaces.
In Section \ref{mwm-s3}, we give equivalent norms and the approximate characterization of these spaces.
In Section \ref{mwm-s4}, we obtain the embedding properties of these spaces.
In Section \ref{mwm-s6}, we obtain the duality of these spaces.

\section{Preliminaries}\label{mwm-s1}

In this section, we recall some definitions and concepts.  First, we make some convention.

Let $\nn$ be the collection of all natural numbers and $\nn_0 = \nn \cup \{0\}$.
Let $\zz$ be the collection of all integers.
Let $\rn$ be $n$-dimensional Euclidean  space, where $n \in \nn$. Put $\rr= \rr^1$, whereas $\mathbb{C}$ is the complex plane.
In the sequel, $C$ denotes positive constants, but it may change from line to line.
For any quantities $A$ and $B$, if there exists a constant $C>0$ such that $A\leq CB$, we write $A \lesssim B$. If $A\lesssim B$ and $B \lesssim A$, we write $A \sim B$.
$B(x,r)$ denoting Ball $B$ of $\rn$  of radius $r>0$ centered at $x \in \rn$.
Let $Q_0=\{\xi : \xi_i \in [-1/2,1/2), i=1,\cdots,n\}$ and $Q_k=k+Q_0, k\in\zn$.
It is easily to see that $\{Q_k\}_{k \in \zn}$ consists in a unit-cube decomposition of $\rn$, that means $\bigcup_{k \in \zn}Q_k=\rn$ and $Q_k \cap Q_j=\emptyset$ if $k\neq j$.
Let us define $\ell + A := \{\ell + a : a\in A\}$ for $\ell \in \zn$ and $A\subseteq \rn$.

Let $p \in [1,\infty)$. Then the Lebesgue space  $L^{p}(\rn)$  equipped with the norm
\[\|f\|_{L^p(\rn)} = \bigg( \int_{\rn} |f(x)|^p {\rm d}x \bigg)^{1/p}.\]
The space $L^{p} _{\rm loc}(\rn)$ is defined by
$L^{p} _{\rm loc}(\rn):=\{f: f\chi_K \in L^{p}(\rn)$  for all compact subsets  $K \subset \rn\},$
where and what follows, $\chi_{S}$ denotes the characteristic function of a set $S\subset \rn$.

The set $\mathcal{S}(\rn)$ stands for the usual Schwartz space of rapidly decreasing complex-valued functions and $\mathcal{S}'(\rn)$ the dual space of tempered distributions.
For $f \in \mathcal{S}(\rn)$, let $\mathcal{F} f$ or $\widehat{f}$ denote the Fourier transform of $f$ defined by
\[ \mathcal{F} f(\xi) =\widehat{f}(\xi) :=(2 \pi)^{-n / 2} \int_{\rn} e^{-i x \xi} f(x) {\rm d}x , \ \xi \in \rn\]
while $f^\vee(\xi)= \widehat{f} (-\xi)$ denote the inverse Fourier transform of $f$.

Now we recall some basic matrix concepts.
For any $m\in \nn$, $M_m(\cc)$ is denoted as the set of all $m \times m$ complex-valued matrices.
For any $A \in M_m(\cc)$, let
\[\|A\|:= \sup_{|\vec{z}|=1} |A\vec{z}|,\]
where $\vec{z}:= (z_1,...,z_m)^{\rm T} \in \cc^m$ and $|\vec{z}| =: (\sum_{i=1}^m |z_i|^2)^{1/2}$, T denotes the transpose of the row vector.

Diagonal matrix $A$ can be denoted as $A={\rm diag}(\lambda_1,...,\lambda_m)$, where $\{\lambda_i\}_{i=1}^m \subset \rr$ .
If $\lambda_1=\cdots =\lambda_m=1$ in the diagonal matrix above, it is called the identity matrix and is denoted by $I_m$.
If there is a matrix $A^{-1} \in M_m(\cc)$ such that $A^{-1}A = I_m$, then matrix $A$ is said to be invertible.

A matrix $A  \in M_m(\cc)$ is called positive definite, if for any $\vec{z}\in \cc^m \setminus \{\vec{0}\}$, $(A\vec{z},\vec{z}) >0$. $A  \in M_m(\cc)$ is nonnegative positive definite, if for any $\vec{z}\in \cc^m \setminus \{\vec{0}\}$, $(A\vec{z},\vec{z}) \geq 0$.

Next, we recall the concept of scale weights and matrix weights.
\begin{defn}
Fix $p \in (1,\infty)$. A positive measurable function $w$ is said to be in the Muckenhoupt class $A_p$, if there exists a positive constant $C$ such that, for all balls $B$ in $\mathbb{R}^n$, such that
\[  \bigg(\frac{1}{|B|}\int_B w(x){\rm d}x \bigg) \bigg(\frac{1}{|B|}\int_B w(x)^{1-p^{\prime}}{\rm d}x \bigg)^{p-1} \leq C.\]
\end{defn}
We say $w \in A_1$, if $Mw(x) \leq Cw(x)$ for a.e. $x$. If $1 \leq p < q < \infty$, then $A_p \subset A_q$.
We denote $A_\infty =\cup_{p>1} A_p$.
Let $1<p < \infty$ and $w$ be a (scalar) weight which is a nonnegative measurable function on $\rn$.  We define the weighted $L^p(w)$ space, which is a Banach space equipped with the norm
\[\|f\|_{L^p(w)} = \bigg( \int_{\rn} |f(x)|^p w(x) {\rm d}x \bigg)^{1/p}.\]

\begin{defn}
A matrix-valued function $W:\rn \rightarrow M_m(\cc)$ is called a matrix weight if $W$ satisfies that\\
{\rm (i)} for any $x \in \rn, W(x)$ is nonnegative definite;\\
{\rm (ii)} for almost every $x \in \rn$, $W(x)$ is invertible;\\
{\rm (iii)} the entries of $W$ are all locally integrable.
\end{defn}

The following definition from \cite{mwr3}.
\begin{defn}
Let $W$ be a matrix weight, $1<p<\infty$. Then  $W \in \mathcal{A}_p$ if and only if
\[ \sup_B \frac{1}{|B|} \int_B \bigg( \frac{1}{|B|} \int_B \| W^{1/p}(x) W^{-1/p}(y) \|^{p^{\prime}} {\rm d}y \bigg)^{p/p^{\prime}} {\rm d}x < \infty, \]
where the supremum is taken over all balls $B \subset \rn$.
We say that $W \in \mathcal{A}_1$, if
\[ \sup_{B}  \mathop{ {\rm ess} \sup}_{y \in B} \frac{1}{|B|} \int_B \|W(t) W^{-1}(y)\| {\rm d}t< \infty, \]
where $\|\cdot\|$ denotes the operator norm of a matrix.
\end{defn}

\begin{defn}
Let $1 \leq p<\infty$ and  $W$ be a matrix weight. The matrix-weighted
Lebesgue space $L^p(W)$ is defined to be the set of all  measurable  vector-valued functions $\vec{f}=(f_1, \cdots, f_m)^{\rm T}: \rn \rightarrow \cc^m$ such that
\[ \|\vec{f} \|_{L^p(W)}= \bigg(\int_{\rn} \|W^{1/p}(x) \vec{f}(x) \|^p {\rm d}x \bigg)^{1/p}<\infty.\]
\end{defn}

\begin{defn}\label{mwm-D2}
Let $1 \leq p < \infty$, a matrix weight $W$ is called doubling matrix weight of order $p$, if there exists a constant $C$ such that for any cube $Q\in \rn$ and any $\vec{z} \in \rn$
\begin{equation}\label{wbl-17}
 \int_{2Q} | W^{1/p}(x) \vec{z}|^p {\rm d}x \leq C  \int_{Q} | W^{1/p}(x) \vec{z}|^p {\rm d}x .
\end{equation}
If $C = 2^\beta$ is the smallest constant for which (\ref{wbl-17}) holds, then $\beta$ is called the doubling exponent of $W$.
\end{defn}

To recall the definition of modulation spaces, we need some general definitions.
Let $\rho \in \cs(\rn)$ and $\rho: \rn \rightarrow [0,1]$ be a smooth radial dump function adapted to the ball $B(0,\sqrt{n})$, satisfying $\rho(\xi)=1$ for $|\xi| \leq \sqrt{n}/2$ and $\rho(\xi)=0$ for $|\xi| \geq \sqrt{n}$.
Let $\rho_k$ be a translation of $\rho: \rho_k(\xi) = \rho(\xi - k)$, $k \in \zn$. Thus, we observe that $\rho_k =1$ in $Q_k$ and $\sum_{k \in \zn} \rho_k(\xi) \geq 1$ for all $\xi \in \rn$.
We write
\[ \phi_k(\xi)=\rho_k(\xi) \bigg( \sum_{k \in \zn} \rho_k(\xi) \bigg)^{-1}, \quad k \in \zn. \]
Then, we have
\begin{equation}\label{wm-4}
\begin{cases}
|\phi_k(\xi)| \geq c, \quad \forall \xi \in Q_k,\\
\supp \phi_k \subset \{ \xi: |\xi-k| \leq \sqrt{n}\},\\
\sum_{k \in \zn} \phi_k(\xi) \equiv 1, \quad \forall \xi \in \rn,\\
|D^\alpha \phi_k(\xi)| \leq C_\beta, \quad \forall \xi \in \rn, |\alpha| \leq \beta.
\end{cases}
\end{equation}
Therefore, the set
\[ Y:=\big\{ \{\phi_k\}_{k \in \zn}: \{\phi_k\}_{k \in \zn} \text{ satisfies (\ref{wm-4})} \big\} \]
is nonempty. Let $\{\phi_k\}_{k \in \zn} \in Y$ be a function sequence. We define
\[ \square_k := \cf^{-1} \phi_k \cf, \quad k \in \zn, \]
which are said to be frequency-uniform decomposition operators. For any $k \in \zn$, we write $\langle k \rangle =(1+|k|^2)^{1/2}$, where $|k|=|k_1|+|k_2|+\cdots +|k_n|$.

In what follows, for any $m \in \nn$, let
\[ [\cs(\rn)]^m:= \{ \vec{f}:=(f_1,\ldots,f_m)^{\rm T}: \text{ for any } i \in \{1,\ldots,m\}, f_i \in \cs(\rn)  \}\]
and
\[ [\cs'(\rn)]^m:= \{ \vec{f}:=(f_1,\ldots,f_m)^{\rm T}: \text{ for any } i \in \{1,\ldots,m\}, f_i \in \cs'(\rn)  \}.\]
\begin{defn}\label{mwm-D1}
Let $1 \leq  p< \infty$, $0<q \leq \infty$ and $s\in \rr$, and $W$ be a matrix weight.
The matrix weighted modulation space $M^{s}_{p,q}  (W)$ is the collection of $\vec{f} \in [\cs'(\rn)]^m$ such that
\[\|\vec{f}\|_{M^{s}_{p,q}(W) }:=   \bigg( \sum_{k \in \zn} \big(\langle k \rangle^s \|W^{1/p} \square_k \vec{f} \|_{L^p} \big)^q \bigg)^{1/q}    <\infty.\]
\end{defn}

\begin{lem}[see {\cite[Remark 2.15]{nm-1}}]\label{wm-L1}
Let $1 \leq p < \infty$, $W$ be a matrix $\mathcal{A}_p$ and $\{\phi_k\}_{k \in \zn} \in Y$. Then there exists a constant $C>0$ such that
\[ \|\cf^{-1} \phi_k \cf \vec{f}\|_{L^p(W)} \leq  C \|\vec{f}\|_{L^p(W)}\]
holds for all $\vec{f} \in L^p(W)$.
\end{lem}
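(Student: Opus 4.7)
The plan is to realize $\cf^{-1}\phi_k \cf$ as convolution against a kernel with rapid decay that is uniform in $k$, and then to dominate the resulting matrix-weighted convolution by an auxiliary maximal operator that is bounded on $L^p$ by the matrix $\mathcal{A}_p$ condition.

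First, I would set $\psi_k := \cf^{-1}\phi_k$, so that $\cf^{-1}\phi_k \cf \vec{f} = \psi_k \ast \vec{f}$ applied componentwise. Using $\supp \phi_k \subset B(k,\sqrt{n})$ (which has volume independent of $k$) and the uniform derivative bounds $|D^\alpha \phi_k(\xi)| \leq C_\beta$ from \eqref{wm-4}, repeated integration by parts in
\[
(ix)^\alpha \psi_k(x) = (-1)^{|\alpha|}(2\pi)^{-n/2}\int_{\rn} e^{ix\xi}\, D^\alpha_\xi \phi_k(\xi)\,d\xi
\]
yields $|\psi_k(x)| \leq C_N \langle x \rangle^{-N}$ for every $N\in\nn$, with $C_N$ independent of $k$.

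Next, inserting $W^{1/p}(x)\vec{f}(y) = W^{1/p}(x)W^{-1/p}(y)\bigl[W^{1/p}(y)\vec{f}(y)\bigr]$ inside the convolution gives the pointwise estimate
\[
\bigl| W^{1/p}(x)(\psi_k \ast \vec{f})(x) \bigr| \lesssim \int_{\rn} \langle x-y \rangle^{-N}\, \| W^{1/p}(x) W^{-1/p}(y)\|\, g(y)\, dy,
\]
where $g(y) := |W^{1/p}(y)\vec{f}(y)|$ satisfies $\|g\|_{L^p} = \|\vec{f}\|_{L^p(W)}$. Splitting $\langle x-y\rangle^{-N}$ along the dyadic annuli $\{2^{j-1}\leq |x-y|<2^j\}$, enclosing each in the ball $B(x,2^j)$, and choosing $N>n$ so that the resulting geometric series $\sum_j 2^{-j(N-n)}$ converges, I would then bound the right-hand side by a constant multiple of
\[
\cm_W g(x) := \sup_{B \ni x} \frac{1}{|B|}\int_B \|W^{1/p}(x) W^{-1/p}(y)\|\, g(y)\, dy.
\]

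The main step is then to verify that $\cm_W$ is bounded on $L^p(\rn)$, and this is where the $\mathcal{A}_p$ hypothesis on $W$ is essential; this is precisely the scalar content of Goldberg's matrix-weighted maximal inequality from \cite{mwg1}, which I would invoke directly. Combining the three steps produces
\[
\|\cf^{-1}\phi_k \cf \vec{f}\|_{L^p(W)} \lesssim \|\cm_W g\|_{L^p} \lesssim \|g\|_{L^p} = \|\vec{f}\|_{L^p(W)},
\]
with a constant independent of $k$. The only real obstacle is the matrix-maximal estimate, which is already available in the literature; the kernel-decay and domination steps are routine once the uniform bound on $\psi_k$ is in hand.
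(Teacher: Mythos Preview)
The paper does not supply its own proof of this lemma; it is quoted from \cite[Remark~2.15]{nm-1}, so there is no in-paper argument to compare against.

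Your argument is correct and standard for $1<p<\infty$. The uniform kernel decay $|\psi_k(x)|\le C_N\langle x\rangle^{-N}$ follows from the support and derivative bounds in \eqref{wm-4} (in fact $|\psi_k|=|\psi_0|$ since $\phi_k=\phi_0(\cdot-k)$), and the domination by the Christ--Goldberg maximal operator, together with its $L^p$-boundedness under the matrix $\mathcal{A}_p$ hypothesis \cite{mwg1}, finishes the proof. One small point worth making explicit: the operator $\cm_W$ you write acts on scalar $g$ through the matrix norm $\|W^{1/p}(x)W^{-1/p}(y)\|$, whereas Goldberg's operator acts on vector-valued input; since $\|A\|\sim\sum_{i=1}^m|A\vec{e}_i|$ on $M_m(\cc)$, your $\cm_W g$ is controlled by a finite sum of Goldberg's maximal functions applied to $g\vec{e}_i$, so the citation is justified.

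The only genuine gap is the endpoint $p=1$, which the statement includes. The maximal-function route fails there because even the unweighted Hardy--Littlewood maximal operator is unbounded on $L^1$, so Goldberg's theorem does not apply. The fix is straightforward: for $p=1$, integrate first in $x$ via Fubini and use the $\mathcal{A}_1$ condition directly, which gives
\[
\int_{B(y,2^j)}\|W(x)W^{-1}(y)\|\,{\rm d}x \le C\,2^{jn}\quad\text{for a.e. }y;
\]
summing the dyadic shells with $N>n$ then yields $\|\psi_k\ast\vec{f}\|_{L^1(W)}\le C\|\vec{f}\|_{L^1(W)}$. You should add this case explicitly.
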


\section{Averaging operators}\label{mwm-s2}
In this section, we show that in Definition \ref{mwm-D1}, the matrix $W$ can be replaced by a sequence of averaging operators.
Let $k$, $\ell \in \zn$, $r_k=\langle k \rangle$ and $a \geq \sqrt{n}/2$, we denote $Q(k,\ell)= (a r_k)^{-1} \ell + [0,(a r_k)^{-1})^n$, $x_{Q(k,\ell)}:= (a r_k)^{-1} \ell$.
For fixed $k$, $\mathcal{Q}_k:=\cup_{\ell} Q(k,\ell)$ forms a partition of $\rn$.
Let $\mathcal{Q}:= \{Q(k,\ell): k \in \zn, \ell \in \zn\} $. $B_k:=B(k \langle k \rangle,\sqrt{n} \langle k \rangle)$, $k\in \zn$.
For $k \in \zz$, we denote
\[ \Omega_k:=\{\vec{f}: f_i \in \cs'(\rn) \text{ with} \supp \widehat{f}_i \subseteq B_k, i=1,2,\cdots,m \}. \]
\begin{defn}
Let matrix weight $W$: $\rn \rightarrow M_m(\cc)$, $m \in \nn$, and $1 \leq p < \infty$.  A sequence $\{A_Q\}_{Q \in \mathcal{Q}}$ of reducing operators of order $p$ for $W$ if  for any $z \in \cc^m$ and $Q \in \mathcal{Q}$,
\[|A_Q \vec{z}| \sim \bigg( \frac{1}{|Q|} \int_Q |W^{1/p}(x) \vec{z}|^p {\rm d}x \bigg)^{1/p},\]
where equivalence constants   depend only on $m$ and $p$.
\end{defn}

\begin{defn}
Let $\{A_Q\}_{Q \in \mathcal{Q}}$ be a sequence of positive definite matrices, $\beta \in (0,\infty)$, and $p \in (0,\infty)$.
The sequence $\{A_Q\}_{Q\in \mathcal{Q}}$ is said to be strongly doubling of order $(\beta, p)$ if there exists a positive constant $C$ such that, for any $Q,$ $P \in \mathcal{Q}$,
\[ \|A_QA_P\|^p \leq C \max \bigg\{ \bigg[\frac{\ell(P)}{\ell(Q)} \bigg]^n, \frac{\ell(Q)}{\ell(P)} \bigg]^{\beta-n}  \bigg\} \bigg[ 1+ \frac{|x_Q-x_P|}{\max\{\ell(P),\ell(Q)\}} \bigg]^\beta .\]
\end{defn}

\begin{defn}
Let $k\in \zn$, $1\leq p<\infty$ and $\{A_Q\}_{Q\in \mathcal{Q}}$ is a sequence of reducing operators of order p for $W$.
The  space $L^p(\{A_{Q}\},k)$ is the collection of $\vec{f} \in [\cs'(\rn)]^m$ such that
\[\|\vec{f}\|_{L^p(\{A_{Q}\},k)} = \bigg\|\sum_{\ell(Q)=(ar_k)^{-1}}  A_{Q} \vec{f} \chi_{Q} \bigg\|_{L^p}<\infty.\]
\end{defn}

\begin{defn}\label{mwm-D3}
Let $1 \leq  p< \infty$, $0<q \leq \infty$ and $s\in \rr$, and $\{A_Q\}_{Q \in \mathcal{Q}}$ be a sequence of $m \times m$ positive definite matrices.
The $\{A_Q\}$-modulation space $M^{s}_{p,q}(\{A_Q\})$ is the collection of $\vec{f} \in [\cs'(\rn)]^m$ such that
\[\|\vec{f}\|_{M^{s}_{p,q}(\{A_{Q}\}) }:=   \bigg( \sum_{k \in \zn} \bigg(\langle k \rangle^s \bigg\|\sum_{\ell(Q)=(ar_k)^{-1}} A_Q \square_k \vec{f} \chi_Q \bigg\|_{L^p} \bigg)^q \bigg)^{1/q}    <\infty.\]
\end{defn}

\begin{lem}[see {\cite[Lemma 4.7]{nm-1}}]\label{mwm-L3}
Let $W$ be a doubling matrix weight of order $p > 0$ with doubling exponent $\beta$
as specified in Definition \ref{mwm-D2}.
Let $\{A_Q\}_{Q \in \mathcal{Q}}$ is a sequence of reducing operators of
order $p$ for $W$.
Then $\{A_Q\}_{Q \in \mathcal{Q}}$ is strongly doubling of order $(\beta, p)$.
\end{lem}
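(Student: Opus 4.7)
The plan is to reduce the matrix estimate to a scalar doubling argument by testing against a single direction. For any fixed $\vec{w} \in \cc^m$ the nonnegative scalar function $f_{\vec{w}}(x) := |W^{1/p}(x) \vec{w}|^p$ inherits the doubling condition from Definition \ref{mwm-D2}: the inequality $\int_{2Q} f_{\vec{w}} \leq 2^{\beta} \int_Q f_{\vec{w}}$ holds uniformly in $\vec{w}$. Together with the defining property of a reducing operator this yields $|A_Q \vec{w}|^p \sim |Q|^{-1} \int_Q f_{\vec{w}}$ and $|A_P \vec{w}|^p \sim |P|^{-1} \int_P f_{\vec{w}}$, so the job is to compare these two averages with constants independent of $\vec{w}$.

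I would argue case by case. Assume first $\ell(Q) \geq \ell(P)$ and select an auxiliary cube $R \supseteq P \cup Q$ with $\ell(R) \asymp \ell(Q) + |x_Q - x_P|$. Then $R$ is a dyadic dilation of $P$ by a factor $2^j$ with $2^j \asymp (\ell(Q) + |x_Q - x_P|)/\ell(P)$, and $j$-fold iteration of the scalar doubling inequality gives $\int_R f_{\vec{w}} \lesssim (\ell(R)/\ell(P))^{\beta} \int_P f_{\vec{w}}$. Since $Q \subset R$,
\[ \frac{1}{|Q|}\int_Q f_{\vec{w}} \leq \frac{|P|}{|Q|}\left(\frac{\ell(R)}{\ell(P)}\right)^{\beta} \frac{1}{|P|}\int_P f_{\vec{w}} \lesssim \left(\frac{\ell(Q)}{\ell(P)}\right)^{\beta - n}\left[1 + \frac{|x_Q - x_P|}{\ell(Q)}\right]^{\beta}\frac{1}{|P|}\int_P f_{\vec{w}}, \]
where the last step uses $\ell(R)/\ell(P)=(\ell(Q)/\ell(P))(1+|x_Q-x_P|/\ell(Q))$ and $|P|/|Q|=(\ell(P)/\ell(Q))^{n}$. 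Translating via the reducing-operator equivalence, $|A_Q \vec{w}|^p \lesssim (\ell(Q)/\ell(P))^{\beta - n}[1 + |x_Q - x_P|/\ell(Q)]^{\beta} |A_P \vec{w}|^p$ for every $\vec{w}$. The symmetric case $\ell(P) \geq \ell(Q)$ uses an enclosing cube adapted to $P$ and produces the factor $(\ell(P)/\ell(Q))^{n}[1 + |x_Q - x_P|/\ell(P)]^{\beta}$, recovering the other branch of the $\max$.

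To pass from the vector inequality to the operator-norm statement, I substitute $\vec{w} = A_P^{-1}\vec{z}$ (legitimate because $A_P$ is positive definite, hence invertible); the left-hand side becomes $|A_Q A_P^{-1} \vec{z}|^p$, and taking the supremum over $|\vec z|=1$ gives the claimed bound on $\|A_Q A_P^{-1}\|^p$, which I read as the intended interpretation of $\|A_Q A_P\|^p$ in the statement (with this convention $P=Q$ correctly gives the identity-norm bound $1$). The principal obstacle is the geometric bookkeeping: the enclosing cube $R$ must be calibrated so that the iterated-doubling exponent $\beta$ combines with the volume ratio $|P|/|Q|$ to produce exactly the exponents $n$ and $\beta-n$ appearing in the $\max$, and one must verify uniformly across both size regimes that the translation factor is correctly captured by $1 + |x_Q - x_P|/\max\{\ell(P),\ell(Q)\}$; the remaining constants must be tracked through the ``$\sim$'' in the reducing-operator definition to ensure they depend only on $m$, $p$, $n$, and the doubling constant of $W$.
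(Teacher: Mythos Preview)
The paper does not actually prove this lemma; it merely cites \cite[Lemma 4.7]{nm-1} and states the result without argument. So there is no ``paper's own proof'' to compare against here.

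Your argument is correct and is essentially the standard one for this type of statement (it is, in particular, the argument behind the cited result). Reducing to the scalar doubling function $f_{\vec w}(x)=|W^{1/p}(x)\vec w|^p$, enclosing $Q\cup P$ in an iterated double $R=2^jP$ of the smaller-average cube, and then tracking the exponents $\beta$ and $n$ through $|P|/|Q|$ and $(\ell(R)/\ell(P))^\beta$ is exactly the right mechanism. Your observation that the operator norm in the definition should be read as $\|A_QA_P^{-1}\|$ rather than the literal $\|A_QA_P\|$ is also correct; the printed definition has a typo (as you note, otherwise the case $P=Q$ would not yield a bounded quantity). One small point of language: when you say ``$R$ is a dyadic dilation of $P$'' you mean $R=2^jP$ concentric with $P$, not that $R$ belongs to a dyadic grid; this is what Definition~\ref{mwm-D2} allows you to iterate, and with that reading the geometric step is clean. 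The dependence of the implicit constants on $m$, $p$, $n$, and the doubling constant is exactly as you describe, coming only from the equivalence in the reducing-operator definition and the number of doublings.
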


\begin{lem}\label{wm-L8}
Let $1\leq p< \infty$, $W$ be a doubling matrix weight of order $p$, and $\vec{f} \in \Omega_k$, $k \in \zz$. Then there exists a constant $C>0$ independent of $k$ such that
\begin{equation}\label{wm-9}
\|\vec{f}\|_{L^p(W)} \leq C \|\vec{f}\|_{L^p(\{A_Q\},k)},
\end{equation}
where $\{A_Q\}_{Q \in \mathcal{Q}}$ is a sequence of reducing operators of order $p$ associated with $W$.
\end{lem}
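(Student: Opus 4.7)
\medskip
\noindent\textbf{Proof plan for Lemma \ref{wm-L8}.}

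The plan is to chop $\rn$ into the cubes $\{Q : \ell(Q)=(ar_k)^{-1}\}$ that appear in the definition of $L^p(\{A_Q\},k)$, estimate $\int_Q |W^{1/p}(x)\vec f(x)|^p\,dx$ locally by $\int_{Q'} |A_{Q'}\vec f|^p\,dy$ with polynomial off-diagonal decay in $|x_Q-x_{Q'}|/\ell(Q)$, and finally sum. The two ingredients that drive the argument are (a) the reducing-operator identity on a single cube and (b) a Plancherel--P\'olya type control of the bandlimited function $A_Q\vec f$ that accounts for the Fourier localization $\supp\widehat{\vec f}\subset B_k$.

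First I would use the reducing operator pointwise: for $x\in Q$,
\[
|W^{1/p}(x)\vec f(x)|^p\le \|W^{1/p}(x)A_Q^{-1}\|^p\,|A_Q\vec f(x)|^p.
\]
Applied to the standard basis, the defining equivalence $|A_Q\vec z|^p\sim |Q|^{-1}\int_Q|W^{1/p}(y)\vec z|^p\,dy$ (with $\vec z=A_Q^{-1}e_j$) and equivalence of matrix norms give the key averaged bound $\int_Q \|W^{1/p}(x)A_Q^{-1}\|^p\,dx\le C|Q|$. So the problem reduces to controlling $|A_Q\vec f(x)|^p$ on $Q$ by the local averages $|Q'|^{-1}\int_{Q'}|A_{Q'}\vec f|^p$ for $Q'\in\mathcal Q_k$.

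Second, because each component of $A_Q\vec f$ has Fourier support in $B_k=B(k\langle k\rangle,\sqrt n\langle k\rangle)$, a Plancherel--P\'olya / Peetre maximal estimate gives, for any $N>n$,
\[
|A_Q\vec f(x)|^p\le \frac{C}{|Q|}\int_{\rn}\frac{|A_Q\vec f(y)|^p}{(1+|x-y|/\ell(Q))^N}\,dy,\qquad x\in Q,
\]
since $\ell(Q)^{-1}\sim\langle k\rangle$. Splitting this integral into the cubes $Q'$ with $\ell(Q')=\ell(Q)$ and using $|A_Q\vec f(y)|\le \|A_QA_{Q'}^{-1}\|\,|A_{Q'}\vec f(y)|$, I invoke Lemma \ref{mwm-L3} (strong doubling with exponent $\beta$) to absorb the growth of $\|A_QA_{Q'}^{-1}\|^p$ into a factor $(1+|x_Q-x_{Q'}|/\ell(Q))^\beta$. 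Choosing $N>\beta+n$ yields
\[
\int_Q |W^{1/p}(x)\vec f(x)|^p\,dx
\le C\sum_{Q'}\frac{1}{(1+|x_Q-x_{Q'}|/\ell(Q))^{N-\beta}}\int_{Q'}|A_{Q'}\vec f(y)|^p\,dy.
\]

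Finally I would sum over $Q$ with $\ell(Q)=(ar_k)^{-1}$, swap the sums, and observe that for each fixed $Q'$ the sum $\sum_Q(1+|x_Q-x_{Q'}|/\ell(Q))^{-(N-\beta)}$ is a convergent lattice sum bounded by a constant independent of $k$. This gives
\[
\|\vec f\|_{L^p(W)}^p=\sum_Q\int_Q |W^{1/p}(x)\vec f(x)|^p\,dx
\le C\sum_{Q'}\int_{Q'}|A_{Q'}\vec f|^p\,dy = C\,\|\vec f\|_{L^p(\{A_Q\},k)}^p,
\]
with $C$ independent of $k$ as required. The main obstacle is the second step: writing down the Plancherel--P\'olya inequality with the correct scale $\ell(Q)\sim\langle k\rangle^{-1}$ matched to $\supp\widehat{\vec f}\subset B_k$, and making sure $N$ can be chosen large enough to beat the doubling exponent $\beta$ coming from Lemma \ref{mwm-L3} while still giving a summable lattice tail; once that is in hand the rest is a clean bookkeeping of cubes.
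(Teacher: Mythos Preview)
Your proposal is correct and uses the same core ingredients as the paper: a Plancherel--P\'olya estimate for the bandlimited function, the strong doubling of reducing operators from Lemma~\ref{mwm-L3}, and the choice $N>\beta+n$ to make the off-diagonal lattice sum converge. The organization differs in one respect. The paper first rescales via $\vec h_k(x)=e^{ik\cdot x/a}\vec f(x/(ar_k))$ so that $\supp\widehat{\vec h_k}\subset B(0,2)$, then writes $\vec h_k=\vec h_k\ast\gamma$ with a fixed Schwartz $\gamma$ and uses the decay of $\gamma$ plus H\"older (this is Roudenko's method from \cite[Lemma~5.1]{mwr1}); after that the reducing operator and doubling enter exactly as in your argument, and a change of variables undoes the rescaling. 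You instead work directly at the native scale $\ell(Q)\sim\langle k\rangle^{-1}$ and enter through the pointwise bound $|W^{1/p}(x)\vec f(x)|\le\|W^{1/p}(x)A_Q^{-1}\|\,|A_Q\vec f(x)|$ together with $\int_Q\|W^{1/p}A_Q^{-1}\|^p\,dx\le C|Q|$. Both routes land on the same weighted off-diagonal sum; the paper's rescaling merely normalizes the bookkeeping to unit cubes. One small point to make explicit in your write-up: when you combine your first two steps you are integrating the product $\|W^{1/p}(x)A_Q^{-1}\|^p\,|A_Q\vec f(x)|^p$ over $Q$, so you should note that your Plancherel--P\'olya bound, once discretized over the cubes $Q'$, is uniform in $x\in Q$ and can therefore be pulled outside $\int_Q\|W^{1/p}(x)A_Q^{-1}\|^p\,dx$.
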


 \begin{proof} We use the method in the proof of  {\cite[Lemma 5.1]{mwr1}}.
Set $\vec{h}_k(x)=e^{ik \cdot \frac{x}{a} }\vec{f}(\frac{x}{ar_k})$ for $k \in \zn$, where $a \geq \sqrt{n}/2$.
Since $\vec{f}\in \Omega_k$, then it is easy to see $\supp (\hat{\vec{h}}_k)  \subseteq B(0,2)$.
By changing the variables, we have
\[\|\vec{f}\|_{L^p(W)}^p = \int_{\rn} |W^{1/p}(x) \vec{f}(x)|^p {\rm d}x \sim r_k^{-n} \sum_{\ell \in \zn} \int_{\ell+[0,1)^n} \Big|W^{1/p}\Big(\frac{x}{ar_k}\Big) \vec{h}_k (x) \Big) \Big|^p {\rm d}x.\]
Let $\Gamma = \{\gamma \in \cs(\rn): \widehat{\gamma}=1 \text{ on } \{\xi \in \rn: |\xi| \leq 2\} \text{ and } \supp \widehat{\gamma} \subseteq \{\xi \in \rn : |\xi|<\pi\} \}$.
Thus, there exists $\gamma \in \Gamma$ such that $\vec{h}_k  =\vec{h}_k \ast \gamma$. Using the decay of $\gamma$ and H\"{o}lder's inequality, we obtain
\[\|\vec{f}\|_{L^p(W)}^p \les   r_k^{-n} \sum_{\ell \in \zn} \int_{\ell+[0,1)^n}   \sum_{\nu \in \zn} \int_{\nu+[0,1)^n} \frac{\big|W^{1/p}\big(\frac{x}{ar_k}\big) \vec{h}_k (y) \big) \big|^p}{(1+|\nu-\ell|)^{M}} {\rm d}y {\rm d}x \]
for some $M > \beta+n$. Note that
\[\int_{Q(k,\ell)} |W^{1/p}(x) \vec{h}_k (y)|^p {\rm d}x \sim r_k^{-n}\int_{\ell+[0,1)^n} |W^{1/p}\big(\frac{x}{ar_k}\big) \vec{h}_k (y)|^p {\rm d}x,\]
which mean $|A_{Q(k,\ell)} \vec{h}_k(y)|^p \sim  \int_{\ell+[0,1)^n} |W^{1/p}\big(\frac{x}{ar_k}\big) \vec{h}_k (y)|^p {\rm d}x$.
Using the doubling property of $W$ (Lemma \ref{mwm-L3} and Definition \ref{mwm-D3}) to shift $Q(k,\ell)$ to $Q(k,\nu)$,   we have
\begin{align*}
\|\vec{f}\|_{L^p}^p
& \les r_k^{-n} \sum_{\ell \in \zn}  \sum_{\nu \in \zn} \int_{\nu+[0,1)^n} (1+|\nu-\ell|)^{-(M-\beta)} |A_{Q(k,\nu)} \vec{h}_k(y)|^p {\rm d}y \\
& \les \sum_{\nu \in \zn} \int_{\nu+[0,1)^n} |A_{Q(k,\nu)} \vec{h}_k(y)|^p {\rm d}y,
\end{align*}
where the sum on $\ell$ converges since $M>\beta+n$. Changing variables $x = (ar_k)^{-1}y$, we obtain (\ref{wm-9}).
\end{proof}

\begin{cor}\label{wm-C1}
Let $s \in \rr$, $1\leq p< \infty$, $0<q< \infty$ and $W$ be a doubling matrix weight of order $p$. Then there exists a constant $C>0$ independent of $k$ such that
\[M^s_{p,q}(\{A_Q\}) \subseteq M^s_{p,q}(W),\]
where $\{A_Q\}_{Q \in \mathcal{Q}}$ is a sequence of reducing operators associated with $W$.
\end{cor}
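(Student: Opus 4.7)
The plan is to deduce this corollary as a routine $\ell^q$-aggregation of the pointwise-in-$k$ estimate already furnished by Lemma~\ref{wm-L8}. The only ingredient with any genuine content is Lemma~\ref{wm-L8} itself; the rest is Definitions~\ref{mwm-D1} and~\ref{mwm-D3} unpacked.

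Fix $\vec{f} \in M^s_{p,q}(\{A_Q\})$. For each $k \in \zn$, since $\widehat{\square_k \vec{f}} = \phi_k\, \widehat{\vec{f}}$ is spectrally localised in $\supp \phi_k$ by (\ref{wm-4}), each component of $\square_k \vec{f}$ has Fourier support in the appropriate frequency ball associated with index $k$. Thus $\square_k \vec{f}$ falls under the hypothesis of Lemma~\ref{wm-L8}, and that lemma then yields, with a constant $C$ independent of $k$ and $\vec{f}$,
\[
\|W^{1/p}\square_k \vec{f}\|_{L^p}
\;\leq\; C\, \bigg\|\sum_{\ell(Q)=(a r_k)^{-1}} A_Q\, \square_k \vec{f}\, \chi_Q \bigg\|_{L^p}.
\]

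Next I would multiply both sides by $\langle k \rangle^s$, raise to the $q$-th power, sum over $k \in \zn$, and take the $q$-th root. The left-hand side is precisely $\|\vec{f}\|_{M^s_{p,q}(W)}$ by Definition~\ref{mwm-D1}, and the right-hand side is $C\|\vec{f}\|_{M^s_{p,q}(\{A_Q\})}$ by Definition~\ref{mwm-D3}, which establishes the embedding.

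The main potential obstacle is already dispatched inside Lemma~\ref{wm-L8}; the corollary itself is essentially tautological given that lemma's uniformity in $k$, which is exactly what legitimises the $\ell^q$-aggregation. The only consistency check worth mentioning is that the doubling-matrix-weight-of-order-$p$ hypothesis on $W$ here matches the standing hypothesis of Lemma~\ref{wm-L8}, so no extra assumption on $W$ is forced on us and no separate treatment is needed when $q < 1$ since the summation step still uses an honest norm/quasinorm on both sides.
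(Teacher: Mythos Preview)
Your proposal is correct and follows essentially the same route as the paper: you observe that $\square_k\vec f\in\Omega_k$ by the support condition on $\phi_k$, invoke Lemma~\ref{wm-L8} uniformly in $k$, and then pass to the $\ell^q$ aggregate via Definitions~\ref{mwm-D1} and~\ref{mwm-D3}. The paper's proof is slightly terser but identical in substance.
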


\begin{proof}
Since $\{\phi_k\}_{k \in \zn} \in Y$, we know that  $\square_k \vec{f} \in \Omega_k$. Taking the $\ell^q$ norms on both sides of the inequality in Lemma \ref{wm-L8}, we obtain
\begin{align*}
\|\vec{f}\|_{M^s_{p,q}(W)}
&= \|\{\langle k \rangle^s \| \square_k \vec{f} \|_{L^p(W)}\}_{k \in \zn}\|_{\ell^q}  \\
&\leq C \|\{\langle k \rangle^s \| \square_k \vec{f} \|_{L^p(\{A_Q\},k)}\}_{k \in \zn}\|_{\ell^q}\\
&= C \|\vec{f}\|_{M^s_{p,q}(\{A_Q\})}.
\end{align*}
 This completes the proof.
\end{proof}

\begin{lem}\label{wm-L11}
Let $1\leq p< \infty$, $W$ be a doubling matrix weight of order $p$. Then there exists a constant $C>0$ independent of $k$ such that
\[ \|\vec{f}\|_{L^p(\{A_Q\},k)} \leq C \|\vec{f}\|_{L^p(W)},\]
where $\{A_Q\}_{Q \in \mathcal{Q}}$ is a sequence of reducing operators associated with $W$.
\end{lem}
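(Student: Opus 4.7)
The plan is to parallel the proof of Lemma~\ref{wm-L8} in the opposite direction. First I would rescale by setting $\vec{h}_k(y)=e^{ik\cdot y/a}\vec{f}(y/(ar_k))$ so that $\widehat{\vec{h}}_k$ is supported in $B(0,2)$ and $\vec{h}_k=\vec{h}_k\ast\gamma$ for some $\gamma\in\Gamma$. The usual change of variables reduces the claimed inequality to showing
\[
\sum_{\ell\in\zn}\int_{\ell+[0,1)^n}|A_{Q(k,\ell)}\vec{h}_k(y)|^p\,dy \lesssim \sum_{\nu\in\zn}\int_{\nu+[0,1)^n}|W^{1/p}(y/(ar_k))\vec{h}_k(y)|^p\,dy.
\]

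Next, using the reducing-operator equivalence at $\vec{v}=\vec{h}_k(y)$, I would rewrite $|A_{Q(k,\ell)}\vec{h}_k(y)|^p\sim\int_{\ell+[0,1)^n}|W^{1/p}(x/(ar_k))\vec{h}_k(y)|^p\,dx$, turning the left-hand side into a double cube integral. I would then apply the band-limited identity $\vec{h}_k(y)=\int\vec{h}_k(w)\gamma(y-w)\,dw$ together with H\"older's inequality exactly as in Lemma~\ref{wm-L8}, producing the pointwise bound
\[
|W^{1/p}(x/(ar_k))\vec{h}_k(y)|^p \lesssim \int |W^{1/p}(x/(ar_k))\vec{h}_k(w)|^p|\gamma(y-w)|\,dw,
\]
and the rapid decay $|\gamma(y-w)|\lesssim(1+|y-w|)^{-M}$ gives the factor $(1+|\ell-\nu|)^{-M}$ whenever $y\in\ell+[0,1)^n$ and $w\in\nu+[0,1)^n$.

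The decisive step is to organize the remaining integrations so that the right-hand side genuinely reproduces $\|\vec f\|_{L^p(W)}^p$ rather than collapsing back to $\|\vec f\|_{L^p(\{A_Q\},k)}^p$. To achieve this, I would integrate the $x$-variable first to recover $|A_{Q(k,\ell)}\vec h_k(w)|^p$, then invoke the strong doubling of $\{A_Q\}_Q$ (Lemma~\ref{mwm-L3}) to replace it by $(1+|\ell-\nu|)^\beta |A_{Q(k,\nu)}\vec h_k(w)|^p$. Since $w\in\nu+[0,1)^n$, applying the reducing-operator identity on $Q(k,\nu)$ rewrites this quantity as an average of $|W^{1/p}(u/(ar_k))\vec h_k(w)|^p$ over $u\in\nu+[0,1)^n$, and a further use of the convolution identity (this time in the $w$-variable against the same kernel $\gamma$) couples the weight and the function at the same spatial point, yielding $|W^{1/p}(w/(ar_k))\vec h_k(w)|^p$ modulo admissible tail terms.

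Summing over $\ell$ with $M>\beta+n$ makes $\sum_\ell(1+|\ell-\nu|)^{-M+\beta}$ convergent, and inverting the rescaling produces the stated inequality with a constant independent of $k$. The hardest part of the argument is exactly the coupling step just described: a literal mirroring of the proof of Lemma~\ref{wm-L8} would only yield the tautology $\|\vec f\|_{L^p(\{A_Q\},k)}^p\lesssim\|\vec f\|_{L^p(\{A_Q\},k)}^p$, so one must carefully combine the band-limited property of $\vec h_k$ with the doubling of $W$ to force the weight matrix $W^{1/p}$ and the evaluation point of $\vec h_k$ onto the diagonal before performing the outer summation.
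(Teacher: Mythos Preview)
Your reduction (steps 1--7) is correct and coincides with the paper's: after rescaling and unwinding the reducing operators you arrive at
\[
\|\vec f\|_{L^p(\{A_Q\},k)}^p \lesssim \sum_{\nu}\int_{\nu+[0,1)^n}\int_{\nu+[0,1)^n}\big|W^{1/p}\!\big(\tfrac{u}{ar_k}\big)\vec h_k(w)\big|^p\,du\,dw,
\]
which is exactly the second displayed line of the paper's proof. At this point the paper does \emph{not} argue further by hand; it invokes \cite[Lemma~3.2]{nm-1} (the analogue of \cite[Lemma~6.3]{mwr1}), a matrix-weighted Plancherel--Polya/sampling inequality, to pass from the off-diagonal expression above to $\|\vec h_k\|_{L^p(W(\cdot/(ar_k)))}^p$.

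Your proposed replacement for that lemma --- the ``coupling step'' of applying $\vec h_k=\vec h_k\ast\gamma$ once more in the $w$-variable --- does not work. Writing $\vec h_k(w)=\int\vec h_k(z)\gamma(w-z)\,dz$ moves the argument of $\vec h_k$ from $w$ to $z$ but leaves $W^{1/p}$ evaluated at $u$; after H\"older and decay you get terms $|W^{1/p}(u/(ar_k))\vec h_k(z)|^p$, and integrating in $u$ returns $|A_{Q(k,\nu)}\vec h_k(z)|^p$. A doubling shift then gives $|A_{Q(k,\mu)}\vec h_k(z)|^p$ with $z\in\mu+[0,1)^n$, i.e.\ the left-hand side again. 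This is precisely the tautology you yourself flagged: convolution can only relocate the argument of $\vec h_k$, while doubling only compares \emph{cube averages} of $|W^{1/p}(\cdot)\vec z|^p$, never pointwise values. No finite iteration of these two moves forces $W^{1/p}$ and $\vec h_k$ onto the diagonal. The diagonalization genuinely requires the external Plancherel--Polya type estimate the paper cites (or an equivalent maximal-function argument for matrix weights), and your sketch does not supply it.
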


\begin{proof}
Set $\vec{h}_k(x)=e^{ik \cdot \frac{x}{a} }\vec{f}(\frac{x}{ar_k})$ for $k \in \zn$, where $a \geq \sqrt{n}/2$. Since $\vec{f}\in \Omega_k$, then it is easy to see $\supp (\hat{\vec{h}}_k)  \subseteq B(0,2)$. Thus,  similar to the proof of  {\cite[Lemma 5.3]{mwr1}},
\begin{align*}
\|\vec{f}\|_{L^p(\{A_Q\},k)}^p
&\sim \sum_{\ell \in \zn} \int_{Q(k,\ell)}  \frac{1}{|Q(k,\ell)|} \int_{Q(k,\ell)} |W^{1/p}(y) f(x)|^p {\rm d}y  {\rm d}x \\
&\sim \sum_{\ell \in \zn} \int_{\ell +[0,1)^n}   \int_{Q(k,\ell)} |W^{1/p}(y) h_k(x)|^p {\rm d}y  {\rm d}x \\
&\les r_k^{-n}\|\vec{h}_k\|_{L^p(W(\frac{\cdot}{ar_k}))}^p\\
&\les \|\vec{f}\|_{L^p(W)}^p,
\end{align*}
where we used {\cite[Lemma 3.2]{nm-1}} instead of {\cite[Lemma 6.3]{mwr1}}.
\end{proof}

\begin{cor}\label{wm-C2}
Let $s \in \rr$, $1\leq p< \infty$, $0<q<\infty$ and $W$ be a doubling matrix weight of order $p$. Then
\[M^s_{p,q}(W)  \subseteq M^s_{p,q}(\{A_Q\}),\]
where $\{A_Q\}_{Q \in \mathcal{Q}}$ is a sequence of reducing operators associated with $W$.
\end{cor}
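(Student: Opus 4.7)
The plan is to follow exactly the same template as the proof of Corollary \ref{wm-C1}, but with Lemma \ref{wm-L11} playing the role that Lemma \ref{wm-L8} played there. The inequality has simply been reversed, so the inclusion is reversed too.

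First, I would fix $\vec{f} \in M^s_{p,q}(W)$ and observe that for $\{\phi_k\}_{k \in \zn} \in Y$, the frequency-uniform piece $\square_k \vec{f} = \cf^{-1}\phi_k \cf \vec{f}$ has Fourier support in $\{\xi : |\xi - k| \leq \sqrt{n}\} \subseteq B_k = B(k\langle k \rangle, \sqrt{n}\langle k \rangle)$, so $\square_k \vec{f} \in \Omega_k$ componentwise. This is the exact hypothesis needed to invoke Lemma \ref{wm-L11}.

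Applying Lemma \ref{wm-L11} to $\square_k \vec{f}$ for each $k \in \zn$ yields
\[ \|\square_k \vec{f}\|_{L^p(\{A_Q\},k)} \leq C \|\square_k \vec{f}\|_{L^p(W)}, \]
with $C$ independent of $k$. Multiplying both sides by $\langle k \rangle^s$ and taking the $\ell^q(\zn)$ norm over $k$ gives
\[ \|\vec{f}\|_{M^s_{p,q}(\{A_Q\})} = \bigl\|\{\langle k \rangle^s \|\square_k \vec{f}\|_{L^p(\{A_Q\},k)}\}_{k \in \zn}\bigr\|_{\ell^q} \leq C \bigl\|\{\langle k \rangle^s \|\square_k \vec{f}\|_{L^p(W)}\}_{k \in \zn}\bigr\|_{\ell^q} = C \|\vec{f}\|_{M^s_{p,q}(W)}, \]
which gives the claimed inclusion together with the norm estimate.

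There is no real obstacle here, since the hard work was done in Lemma \ref{wm-L11} (where the doubling property of $W$ and the reducing operator identification are used); the corollary is just its $\ell^q$-aggregation. The only point to be careful about is that the constant in Lemma \ref{wm-L11} is independent of $k$, which is precisely what allows the $\ell^q$ norm to be pulled through uniformly.
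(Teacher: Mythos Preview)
Your proposal is correct and matches the paper's own proof essentially verbatim: the paper simply states that the argument is identical to that of Corollary~\ref{wm-C1} with Lemma~\ref{wm-L11} in place of Lemma~\ref{wm-L8}, which is precisely what you carry out. Your added remarks about $\square_k\vec{f}\in\Omega_k$ and the $k$-independence of the constant are exactly the points used in the paper's proof of Corollary~\ref{wm-C1}.
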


\begin{proof}
The proof is the same as the proof of Corollary \ref{wm-C1}, replacing Lemma \ref{wm-L8} by Lemma \ref{wm-L11}.
\end{proof}

Combining Corollary \ref{wm-C1} and \ref{wm-C2}, we have the following lemma.
\begin{lem}\label{mwm-L2}
Let $s \in \rr$, $1\leq p< \infty$, $0<q<\infty$, $W$ be a doubling matrix weight of order $p$, and $\{A_Q\}_{Q \in \mathcal{Q}}$ is a sequence of reducing operators associated with $W$. Then
\[M^s_{p,q}(W)  =  M^s_{p,q}(\{A_Q\}).\]
\end{lem}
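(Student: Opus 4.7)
The plan is straightforward: Lemma \ref{mwm-L2} is essentially a packaging of the two corollaries immediately preceding it, so I would derive it by simply quoting them and observing that together they give the two-sided norm equivalence.

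First I would note that $M^s_{p,q}(W)$ and $M^s_{p,q}(\{A_Q\})$ are defined by the same template, differing only in how the norm of $\square_k \vec{f}$ is measured at each frequency scale (the matrix weight $W$ versus the reducing operators $\{A_Q\}$). Thus, equality of the two spaces as sets, with equivalent quasi-norms, follows at once from the containments
\[
M^s_{p,q}(\{A_Q\}) \subseteq M^s_{p,q}(W) \quad\text{and}\quad M^s_{p,q}(W) \subseteq M^s_{p,q}(\{A_Q\}),
\]
which are precisely Corollary \ref{wm-C1} and Corollary \ref{wm-C2}, both of which are already established with explicit constant bounds.

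The real content of the proof lies one level deeper, in Lemma \ref{wm-L8} and Lemma \ref{wm-L11}. These give the pointwise band-limited equivalence $\|\vec{g}\|_{L^p(W)} \sim \|\vec{g}\|_{L^p(\{A_Q\},k)}$ for $\vec{g} \in \Omega_k$, with constants independent of $k$. Applied to $\vec{g} = \square_k \vec{f}$ (which belongs to $\Omega_k$ thanks to $\{\phi_k\} \in Y$), weighting by $\langle k\rangle^s$, and taking the $\ell^q$ norm in $k$, the uniformity in $k$ is exactly what lets the equivalence pass through the outer sum. So the only thing to check is that the constants produced by Lemmas \ref{wm-L8} and \ref{wm-L11} do not depend on $k$, which the statements of those lemmas explicitly record.

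I do not expect any serious obstacle here: the hard work—the reproducing-kernel estimate on $\vec{h}_k(x) = e^{i k \cdot x/a}\vec{f}(x/(ar_k))$, the use of the doubling property together with Lemma \ref{mwm-L3} to shift the reducing operators between cubes $Q(k,\ell)$ and $Q(k,\nu)$, and the convergence of the tail sum for $M > \beta + n$—has already been carried out. The proof of Lemma \ref{mwm-L2} itself is therefore a one-line deduction from Corollaries \ref{wm-C1} and \ref{wm-C2}.
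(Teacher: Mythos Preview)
Your proposal is correct and matches the paper's approach exactly: the paper states Lemma~\ref{mwm-L2} without a separate proof, simply prefacing it with ``Combining Corollary~\ref{wm-C1} and~\ref{wm-C2}, we have the following lemma.'' Your additional remarks about the underlying Lemmas~\ref{wm-L8} and~\ref{wm-L11} are accurate and helpful context, but the formal proof is indeed the one-line deduction you describe.
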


\section{Equivalent norms}\label{mwm-s3}
In this section, we discuss the norm equivalence of $M^s_{p,q}(W)$, as well as its approximate characterization.
In this section and the following sections, if there is no explanation, we always use the following definition. Define
\begin{equation}\label{wm-5}
\Lambda:=\{ l \in \zn: B(l,\sqrt{2n}) \cap B(0,\sqrt{2n})\neq \emptyset \}.
\end{equation}
\begin{defn}
Let $g \in \cs(\rn)$ be a fixed, nonzero window function (smooth cut-off function) and $\vec{f} \in [\cs'(\rn)]^m$ a tempered distribution.
The short-time Fourier transform of a function $\vec{f}$ with respect to $g$ is defined as
\[ V_g \vec{f}(x,\xi) = \int_{\rn} e^{-it\cdot \xi} \overline{g(y-x)} \vec{f}(y){\rm d}y.\]
\end{defn}

\begin{defn}
Let $1 \leq  p< \infty$, $0<q \leq \infty$ and $s\in \rr$.
Furthermore, let $W$ be a matrix weight and nonzero function $g\in \cs(\rn)$.
We write  for $\vec{f} \in [\cs'(\rn)]^m$,
\[\|\vec{f}\|_{M^{s}_{p,q}(W)}^\circ:=   \bigg( \int_{\rn} \bigg( \int_{\rn}  |W^{1/p} V_g \vec{f}(x,\xi) |^p {\rm d}x \bigg)^{q/p} \langle \xi \rangle^{sq} d\xi \bigg)^{1/q}    <\infty,\]
where $V_g \vec{f}=(V_g f_1, V_g f_2, \cdots, V_g f_m )^{\rm T}$.
\end{defn}

\begin{lem}\label{wm-L2}
Let $\{ \phi_k\}_{k \in \zn}$, $\{ \varphi_k\}_{k \in \zn} \in Y$, $s \in \rr$, $1 \leq p <\infty$, $0<q<\infty$ and $W$ be a matrix in $\mathcal{A}_p$. Then $\{ \phi_k\}_{k \in \zn}$ and $\{ \varphi_k\}_{k \in \zn} \in Y$ generate equivalent norms on $M^s_{p,q}(W)$.
\end{lem}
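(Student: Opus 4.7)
The plan is a standard Fourier-support argument. First I would exploit the support condition in (\ref{wm-4}): both $\{\phi_k\}_{k\in\zn}$ and $\{\varphi_k\}_{k\in\zn}$ consist of functions supported in balls of radius $\sqrt{n}$ centered at the lattice points, so the product $\varphi_k \phi_j$ is nonzero only for finitely many $j$, namely those with $j = k+l$ for $l$ in the finite set $\Lambda$ from (\ref{wm-5}). Writing $\square_k^\varphi := \cf^{-1}\varphi_k \cf$ and $\square_{k+l}^\phi := \cf^{-1}\phi_{k+l}\cf$, the partition of unity $\sum_{j} \phi_j \equiv 1$ then yields the key decomposition
\[
\square_k^\varphi \vec{f} = \sum_{l \in \Lambda} \square_k^\varphi \square_{k+l}^\phi \vec{f}.
\]

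Next I would apply Lemma \ref{wm-L1} to $\square_k^\varphi$. Since $\{\varphi_k\}_{k\in\zn} \in Y$ and $W \in \mathcal{A}_p$, the lemma yields a constant $C$, independent of $k$, such that $\square_k^\varphi$ is bounded on $L^p(W)$. Applying this componentwise with argument $\square_{k+l}^\phi \vec{f}$ gives
\[
\|W^{1/p} \square_k^\varphi \vec{f}\|_{L^p} \leq C \sum_{l \in \Lambda} \|W^{1/p} \square_{k+l}^\phi \vec{f}\|_{L^p}.
\]

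Finally, I would take the weighted $\ell^q$-norm in $k$. Because $\Lambda$ is finite, $\langle k \rangle \sim \langle k+l \rangle$ uniformly for $l \in \Lambda$. Using the triangle inequality (when $q \geq 1$) or the elementary estimate $\bigl(\sum_l a_l\bigr)^q \leq \sum_l a_l^q$ (when $0 < q < 1$), combined with swapping the summations in $k$ and $l$ and a shift of the summation index, one obtains
\[
\|\vec{f}\|_{M^s_{p,q}(W)}^\varphi \lesssim \|\vec{f}\|_{M^s_{p,q}(W)}^\phi.
\]
The reverse inequality follows by interchanging the roles of $\phi$ and $\varphi$, establishing the claimed equivalence.

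The main (and essentially only) nontrivial ingredient is Lemma \ref{wm-L1}, which supplies the uniform-in-$k$ boundedness of the frequency localizers on $L^p(W)$ from the matrix $\mathcal{A}_p$ condition; once that is in hand, the proof is bookkeeping. The one place that merits care is the $0 < q < 1$ regime, where one must pass to $q$-th powers before summing in $l$, but this is handled immediately by subadditivity of $x \mapsto x^q$.
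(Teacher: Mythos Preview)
Your proposal is correct and follows essentially the same approach as the paper: decompose $\square_k^\varphi$ via the partition of unity $\sum_j \phi_j \equiv 1$ and the support condition, reduce to a finite index set $\Lambda$, apply Lemma \ref{wm-L1} for the uniform-in-$k$ bound, and then pass to the $\ell^q$-norm with the shift $\langle k\rangle \sim \langle k+l\rangle$. The only cosmetic difference is that the paper inserts the translation identity $(\cf^{-1} m \cf \vec{f})(x) = e^{ixk}[\cf^{-1} m(\cdot+k)\cf(e^{-iky}\vec{f}(y))](x)$ before invoking the multiplier bound, so as to work with multipliers supported in the fixed compact set $\Omega=\bigcup_{l\in\Lambda}B(l,\sqrt{2n})$; since Lemma \ref{wm-L1} as stated already asserts a $k$-independent constant, your direct application is equally valid.
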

\begin{proof}
We put
\[ \square^\phi_k :=\cf^{-1} \phi_k \cf, \quad  \square^\varphi_k := \cf^{-1} \varphi_k \cf.\]
By translation identity, we have
\begin{equation}\label{wm-6}
(\cf^{-1} m \cf \vec{f})(x) = e^{ixk} \big[ \cf^{-1} m(\cdot+k) \cf(e^{iky} \vec{f}(y)) \big](x).
\end{equation}
From (\ref{wm-5}), we have
\begin{equation}\label{wm-7}
\|\square^\phi_k \vec{f}\|_{L^p(W)}  \leq \sum_{l \in \Lambda} \| \square^\phi_k \vec{f} \square^\varphi_{k+l}  \|_{L^p(W)}.
\end{equation}
By (\ref{wm-6}) and Lemma \ref{wm-L1} for $\Omega=\cup_{l \in \Lambda} B(l,\sqrt{2n})$, we obtain
\begin{align}\label{wm-8}
 \| \square^\phi_k \vec{f} \square^\varphi_{k+l} \|_{L^p(W)}  &=  \| \cf^{-1} \phi_k (\cdot +k) \varphi_{k+l}(\cdot+k) \cf(e^{-iky} \vec{f}(y)) \|_{L^p(W)}  \nonumber\\
 &\leq  \| \square^\varphi_{k+l} \vec{f} \|_{L^p(W)}.
\end{align}
 Therefore, (\ref{wm-7}) and (\ref{wm-8}) imply that
\[ \|\square^\phi_k \vec{f}\|_{L^p(W)}  \leq \sum_{l \in \Lambda} \| \square^\varphi_{k+l} \vec{f} \|_{L^p(W)}. \]
From the definition of the matrix weighted modulation space, we obtain $\|\vec{f}\|^\phi_{M^s_{p,q}(W)} \lesssim \|\vec{f}\|^\varphi_{M^s_{p,q}(W)}$. The reverse inequality is similar, we end the proof.
\end{proof}

\begin{thm}
Let $1 \leq  p< \infty$, $0<q \leq \infty$, $s \in \rr$ and $W$ be a matrix in $\mathcal{A}_p$. Then there exist positive constants $C_1$ and $C_2$ such that
\[ C_1 \|f\|^\circ_{M^{s}_{p,q}(W)} \leq \|f\|_{M^{s}_{p,q}(W)} \leq C_2 \|f\|^\circ_{M^{s}_{p,q}(W)} .\]
\end{thm}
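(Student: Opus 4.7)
The plan is a pointwise-in-$\xi$ comparison, on unit cubes $Q_k$, between $\|V_g\vec f(\cdot,\xi)\|_{L^p(W)}$ and $\|\square_k\vec f\|_{L^p(W)}$. The basic identity
$$V_g\vec f(x,\xi)=e^{-ix\cdot\xi}(\vec f*h_\xi)(x),\qquad \hat h_\xi(\eta)=\overline{\hat g(\xi-\eta)},$$
removes the exterior phase from any matrix-weighted $L^p$ norm, so it suffices to estimate $\|\vec f*h_\xi\|_{L^p(W)}$. A standard reproducing-formula argument shows the norm $\|\cdot\|^\circ_{M^s_{p,q}(W)}$ is independent of the particular nonzero window $g\in\cs(\rn)$, so we may pick $g$ a Gaussian, for which $\hat g$ is everywhere strictly positive, rapidly decreasing, and smooth.

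For the inequality $\|\vec f\|_{M^s_{p,q}(W)}\lesssim\|\vec f\|^\circ_{M^s_{p,q}(W)}$, fix $\xi\in Q_k$. Since $\supp\phi_k\subseteq B(k,\sqrt n)$ and $Q_k\subseteq B(k,\sqrt n/2)$, we have $\xi-\eta\in B(0,\tfrac{3}{2}\sqrt n)$ whenever $\eta\in\supp\phi_k$, so $\overline{\hat g(\xi-\eta)}\geq c>0$ uniformly. Define
$$m_{k,\xi}(\eta):=\phi_k(\eta)\big/\overline{\hat g(\xi-\eta)};$$
this is a smooth bump supported in $B(k,\sqrt n)$ whose $C^N$ norms, after translation to the origin, are controlled uniformly in $k\in\zn$ and $\xi\in Q_k$. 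The identity $\widehat{\square_k\vec f}=m_{k,\xi}\cdot\widehat{\vec f*h_\xi}$ together with the natural extension of Lemma \ref{wm-L1} to such shifted bumps yields
$$\|\square_k\vec f\|_{L^p(W)}\lesssim\|V_g\vec f(\cdot,\xi)\|_{L^p(W)}$$
uniformly in $\xi\in Q_k$. Raising to the $q$-th power, integrating over $Q_k$ (which has unit measure) and using $\langle\xi\rangle\sim\langle k\rangle$ there, then summing in $k$, gives one direction; the case $q=\infty$ is analogous with essential supremum.

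For the converse, decompose $\vec f*h_\xi=\sum_{l\in\zn}\square_{k+l}\vec f*h_\xi$ for $\xi\in Q_k$. On the Fourier support $B(k+l,\sqrt n)$ of $\square_{k+l}\vec f*h_\xi$, the function $\hat h_\xi(\eta)=\overline{\hat g(\xi-\eta)}$ is a Schwartz bump whose $C^N$ norm is controlled by $C_N\langle l\rangle^{-N}$ for every $N\in\nn$, since $|\xi-\eta|\gtrsim|l|$ on that set for $|l|$ large. The same multiplier estimate then yields
$$\|\square_{k+l}\vec f*h_\xi\|_{L^p(W)}\lesssim C_N\langle l\rangle^{-N}\|\square_{k+l}\vec f\|_{L^p(W)}.$$
Summing in $l$ (via the $q$-triangle inequality when $0<q<1$ and the usual triangle inequality when $q\geq 1$), integrating over $\xi\in Q_k$, summing in $k$, and absorbing the rapid decay in $l$ by a discrete convolution on $\ell^q(\zn)$ against a fast-decaying sequence, produces $\|\vec f\|^\circ_{M^s_{p,q}(W)}\lesssim\|\vec f\|_{M^s_{p,q}(W)}$.

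The main obstacle is the uniform multiplier estimate $\|\cf^{-1}m\cf\vec u\|_{L^p(W)}\lesssim\|\vec u\|_{L^p(W)}$ for smooth bumps $m$ supported in $B(k,\sqrt n)$ with $C^N$ norms of the origin-centered version uniformly controlled. Lemma \ref{wm-L1} furnishes exactly this boundedness for the particular partition-of-unity members of $Y$; the required extension is standard because the convolution kernel $\cf^{-1}m$ is a modulated Schwartz function whose $L^1$ behavior and associated Peetre-type estimates against an $\mathcal{A}_p$ matrix weight are unaffected by the lattice shift. Once this uniform multiplier bound is in place, both directions follow from the decomposition arguments sketched above.
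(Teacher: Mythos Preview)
Your argument is correct in outline but takes a genuinely different route from the paper. The paper chooses a window $g$ with $\widehat g$ \emph{compactly supported} (equal to $1$ on $B(0,3\sqrt n)$, supported in $B(0,100\sqrt n)$), whereas you choose a Gaussian. This choice drives the whole structure of the converse estimate: in the paper, $\supp\widehat g(\cdot-\xi_k)$ meets only a bounded number of $\supp\phi_{k+l}$, so the reverse direction is a \emph{finite} sum over $l\in\Lambda$ and no decay or convolution argument in $\ell^q(\zn)$ is needed. Your Gaussian forces an infinite sum, which you compensate for by the $\langle l\rangle^{-N}$ decay of the shifted symbol and a discrete Young-type inequality; this is more work but is also more robust, since nothing in your converse direction uses the particular form of $g$ beyond $\widehat g\in\cs(\rn)$. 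For the forward direction the two proofs are essentially dual: the paper uses $\widehat g(\cdot-\xi_k)\equiv 1$ on $\supp\phi_k$ so that $\square_k\vec f=\cf^{-1}\phi_k\cf(\cf^{-1}\widehat g(\cdot-\xi_k)\cf\vec f)$ and Lemma~\ref{wm-L1} applies directly, while you divide by the nonvanishing $\overline{\widehat g(\xi-\cdot)}$ and must invoke an extended multiplier bound.

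Two points are worth flagging. First, both proofs lean on a version of Lemma~\ref{wm-L1} for smooth, compactly supported, translated multipliers that are not literally members of $Y$; you are more candid about this (``the main obstacle'') and your sketch of why it holds is adequate, but it is the same implicit extension the paper uses in~(\ref{wm-3}). Second, your invocation of window-independence of $\|\cdot\|^\circ_{M^s_{p,q}(W)}$ to justify the Gaussian choice is an extra ingredient the paper does not supply either---the paper simply fixes a convenient $g$ without comment. In practice your argument, once completed, proves the equivalence for any Schwartz window with $\widehat g$ nonvanishing on $B(0,\tfrac32\sqrt n)$, and hence gives window-independence for that class as a byproduct; the paper's argument gives it only for compactly Fourier-supported windows.
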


\begin{proof}
By the fundamental STFT identity (Proposition 1.82 in \cite{mwbo1}), we obtain
\[ V_g \vec{f}(x,\xi)= e^{-2 \pi ix\cdot \xi} V_{\widehat{g}} \widehat{\vec{f}}(\xi,-x)= e^{-2 \pi ix\cdot \xi} \big( \cf^{-1} \widehat{g}(\cdot- \xi) \cf \vec{f} \big)(x). \]
By the mean value theorem, we can find $\xi_k \in Q_k$ such that
\begin{align}\label{wm-1}
\|f\|^\circ_{M^{s}_{p,q}(W)} &= \bigg( \int_{\rn} \langle \xi \rangle^{sq} \big\| \cf^{-1} \widehat{g}(\cdot- \xi) \cf \vec{f} \big\|_{L^p(W)}^q d\xi \bigg)^{1/q} \nonumber \\
& \sim \bigg( \sum_{k \in \zn} \langle k \rangle^{sq} \big\| \cf^{-1} \widehat{g}(\cdot- \xi_k) \cf \vec{f} \big\|_{L^p(W)}^q  \bigg)^{1/q}.
\end{align}
We can assume that $\widehat{g}$ is a smooth bump function compactly supported in $B(0,100\sqrt{n})$ and $\widehat{g}(\xi)=1$ in $B(0,3\sqrt{n})$.
Then by Lemma \ref{wm-L1}, we have

\begin{align}\label{wm-2}
\| \square_k \vec{f} \|_{L^p(W)}
&=\| \cf^{-1} \phi_k \cf \vec{f} \|_{L^p(W)}=\| \cf^{-1} \phi_k \widehat{g}(\cdot- \xi_k) \cf \vec{f} \|_{L^p(W)} \nonumber \\
&\leq \| \cf^{-1}  \widehat{g}(\cdot- \xi_k) \cf \vec{f} \|_{L^p(W)}.
\end{align}
From (\ref{wm-1}) and (\ref{wm-2}), we immediately get that $\|f\|_{M^{s}_{p,q}(W)} \leq \|f\|^\circ_{M^{s}_{p,q}(W)}$.

For the reverse inequality, it is easy to see that the $\supp \widehat{g}(\cdot-\xi_k)$ intersects at most $O(\sqrt{n})$ many $\supp \phi_k$. by Lemma \ref{wm-L1}, we obtain
 \begin{align}\label{wm-3}
\big\| \cf^{-1} \widehat{g}(\cdot- \xi) \cf \vec{f} \big\|_{L^p(W)}
&= \bigg\| \cf^{-1} \sum_{ l \in \Lambda} \phi_{k+l} \widehat{g}(\cdot- \xi) \cf \vec{f} \bigg\|_{L^p(W)}  \nonumber\\
&\lesssim \sum_{ l \in \Lambda} \big\| \cf^{-1}  \phi_{k+l} \cf \vec{f} \big\|_{L^p(W)} \nonumber\\
& = \sum_{ l \in \Lambda} \|\square_{k + l} \vec{f}\|_{L^p(W)}.
\end{align}
 By (\ref{wm-1}) and (\ref{wm-3}), we have $\|\vec{f}\|^\circ_{M^{s}_{p,q}(W)} \lesssim \|\vec{f}\|_{M^{s}_{p,q}(W)}$. This completes the proof.
\end{proof}

\begin{thm}\label{w-eq1}
 Let $s \in \rr$, $1 \leq  p< \infty$, $0<q \leq \infty$ and $W$ be a $\mathcal{A}_p$ matrix. Also let $\vec{f} \in [\cs'(\rn)]^m$, then $\vec{f}\in M^{s}_{p,q}(W)$
 if and only if, there exists a sequence of continuous functions $\{\vec{f}_k\}_{k \in \zn} \subset L^p(W)$
 such that $\vec{f} =\sum_{k \in \zn} \square_k \vec{f}_k$ in $[\cs'(\rn)]^m$, $\|\{ \langle k \rangle^{s}\vec{f}_k\}_{k \in \zn}\|_{\ell^{q}(L^{p}(W))}<\infty$. In the case,
\[\|\vec{f}\|_{M^{s}_{p,q} (W)} \sim \inf \|\{\langle k \rangle^{s} \vec{f}_k\}_{k \in \zn}\|_{\ell^{q}(L^{p}(W))}.\]
where the infimum is taken for all above decompositions of $f$.
\end{thm}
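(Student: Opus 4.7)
The plan is to prove the two inclusions separately, in each case leveraging the almost-orthogonality of the frequency-uniform decomposition, namely that $\supp\phi_j \cap \supp\phi_k = \emptyset$ unless $k - j \in \Lambda$ (cf.\ \eqref{wm-5}), together with the uniform $L^p(W)$-boundedness of each $\square_k$ supplied by Lemma \ref{wm-L1}. This parallels the strategy used to prove norm independence in Lemma \ref{wm-L2}.

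For the necessity (``only if'') direction, given $\vec{f}\in M^s_{p,q}(W)$, I would manufacture the required decomposition by thickening the partition of unity. Pick $\{\tilde\phi_k\}_{k\in\zn}\in Y$ with $\tilde\phi_k\equiv 1$ on $\supp\phi_k$ (and still $\supp\tilde\phi_k$ contained in a ball of radius $O(\sqrt{n})$ about $k$), and set $\tilde\square_k := \cf^{-1}\tilde\phi_k\cf$ and $\vec{f}_k := \tilde\square_k\vec{f}$. Since $\phi_k\tilde\phi_k = \phi_k$, we have $\square_k\vec{f}_k = \square_k\vec{f}$, so that
\[
\sum_{k\in\zn}\square_k\vec{f}_k = \sum_{k\in\zn}\square_k\vec{f} = \vec{f}\quad\text{in }[\cs'(\rn)]^m.
\]
Each $\vec{f}_k$ has compactly supported Fourier transform, hence is a continuous (in fact $C^\infty$) function by Paley--Wiener. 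Arguing exactly as in the proof of Lemma \ref{wm-L2}, I would obtain
\[
\|\vec{f}_k\|_{L^p(W)} = \|\tilde\square_k\vec{f}\|_{L^p(W)} \lesssim \sum_{l\in\Lambda}\|\square_{k+l}\vec{f}\|_{L^p(W)},
\]
and then take $\ell^q$-norms weighted by $\langle k\rangle^s$, using $\langle k+l\rangle\sim\langle k\rangle$ uniformly for $l$ in the finite set $\Lambda$, to conclude $\|\{\langle k\rangle^s\vec{f}_k\}\|_{\ell^q(L^p(W))}\lesssim\|\vec{f}\|_{M^s_{p,q}(W)}$.

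For the sufficiency (``if'') direction, suppose $\vec{f}=\sum_k\square_k\vec{f}_k$ with the stated finiteness. Applying $\square_j$ term-by-term and invoking the Fourier-support disjointness, only indices with $k-j\in\Lambda$ survive:
\[
\square_j\vec{f} = \sum_{l\in\Lambda}\square_j\square_{j+l}\vec{f}_{j+l}.
\]
By Lemma \ref{wm-L1} applied twice (or to the composition), $\|\square_j\square_{j+l}\vec{f}_{j+l}\|_{L^p(W)}\lesssim\|\vec{f}_{j+l}\|_{L^p(W)}$. Multiplying by $\langle j\rangle^s$, taking the $\ell^q$-quasinorm in $j$, and using $\langle j\rangle\sim\langle j+l\rangle$ together with the finiteness of $\Lambda$ yields $\|\vec{f}\|_{M^s_{p,q}(W)}\lesssim\|\{\langle k\rangle^s\vec{f}_k\}\|_{\ell^q(L^p(W))}$, whence the infimum over admissible decompositions controls the modulation norm from above. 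Combined with the necessity direction (where the infimum is realized up to constants by the explicit choice $\vec{f}_k=\tilde\square_k\vec{f}$), this yields the claimed equivalence.

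The main technical obstacle I anticipate is twofold. First, in the regime $0<q<1$ the triangle inequality in $\ell^q$ fails, so I must use the $q$-subadditivity $(a+b)^q\le a^q+b^q$, which combined with $\#\Lambda<\infty$ still delivers the bound with an adjusted constant. Second, justifying that $\sum_k\square_k\vec{f}_k$ converges in $[\cs'(\rn)]^m$ from only the weighted $\ell^q(L^p(W))$ assumption requires an embedding of $L^p(W)$ into $[\cs'(\rn)]^m$ with polynomial control, which follows from the local integrability of $W^{-1/p}$ built into the matrix-$\mathcal{A}_p$ hypothesis (invoked through Lemma \ref{wm-L1}); the compact Fourier support of each $\square_k\vec{f}_k$ then upgrades the partial sums to a Cauchy sequence of tempered distributions.
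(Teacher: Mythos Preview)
Your approach is correct and essentially identical to the paper's: both directions rely on the finite overlap encoded by $\Lambda$ together with Lemma~\ref{wm-L1}, and for the necessity direction the paper takes the concrete thickened window $\tilde\phi_k := \sum_{l\in\Lambda}\phi_{k+l}$, i.e.\ $\vec{f}_k := \sum_{l\in\Lambda}\square_{k+l}\vec{f}$, which is exactly your $\tilde\square_k\vec{f}$ for that choice. One small slip: you cannot have $\{\tilde\phi_k\}_{k\in\zn}\in Y$ \emph{and} $\tilde\phi_k\equiv 1$ on $\supp\phi_k$, since neighboring supports overlap and the partition-of-unity condition would force the sum to exceed $1$ there; but you never use the partition property, only the uniform support and smoothness bounds, so simply drop the ``$\in Y$'' and the argument goes through unchanged.
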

\begin{proof}
Let $\vec{f} \in M^s_{p,q}(W)$. Put $\vec{f}_k= \sum_{l \in \Lambda}  \square_{k + l} \vec{f}$. Then, we obtain
\[ \vec{f} = \sum_{k \in \zn} \square_k \vec{f}_k \text{ \quad and } \|\{\langle k \rangle^{s} \vec{f}_k\}_{k \in \zn}\|_{\ell^{q}(L^{p}(W))} \lesssim \|f\|_{M^{s}_{p,q} (W)} .\]
On the other hand, by Lemma \ref{wm-L1}, we have
\[ \|\square_k \vec{f}\|_{L^{p}(W)} \lesssim \sum_{l \in \Lambda}  \|\square_k \square_{k+l} \vec{f}_{k+l}\|_{L^{p}(W)} \lesssim \sum_{l \in \Lambda}  \|\vec{f}_{k+l}\|_{L^{p}(W)} .\]
Thus, we have
\[ \|\vec{f}\|_{M^{s}_{p,q} (W)}  \lesssim \|\{\langle k \rangle^{s} \vec{f}_k\}_{k \in \zn}\|_{\ell^{q}(L^{p}(W))}.\]
This completes the proof.
\end{proof}

\section{Embeddings}\label{mwm-s4}
\begin{thm}\label{wm-em1}
Let $W$ be a $\mathcal{A}_p$ matrix, $s \in \rr$ and $1\leq p <\infty$.\\
{\rm (i)} If $ 0< q_0 \leq q_1 \leq \infty$, then
\[M^{s}_{p,q_0} (W) \hookrightarrow M^{s}_{p,q_1} (W);\]
{\rm (ii)} If $\epsilon>0$ and $\epsilon q_1>n$, then

\[M^{s+\epsilon}_{p,q_0} (W) \hookrightarrow M^s_{p,q_1} (W).\]
\end{thm}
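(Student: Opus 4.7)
The plan is to prove (i) by a direct sequence-space embedding, and to handle (ii) by reducing to the case $q_0=\infty$ together with an elementary summability argument driven by the condition $\epsilon q_1>n$.

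For part (i), unwinding Definition \ref{mwm-D1}, the norm $\|\vec{f}\|_{M^{s}_{p,q}(W)}$ is exactly the $\ell^{q}(\zn)$-norm of the nonnegative sequence
\[
a_k := \langle k\rangle^{s}\,\|W^{1/p}\square_k\vec{f}\|_{L^p},\qquad k\in\zn.
\]
Since $0<q_0\leq q_1\leq \infty$ gives the classical nesting $\ell^{q_0}(\zn)\hookrightarrow \ell^{q_1}(\zn)$ with norm at most $1$ (interpreting $\ell^{\infty}$ as the sup-norm), the inclusion $M^{s}_{p,q_0}(W)\hookrightarrow M^{s}_{p,q_1}(W)$ follows at once, with no use of the matrix $\mathcal{A}_p$ structure.

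For part (ii), I would first apply (i) with target index $\infty$ to obtain
\[
M^{s+\epsilon}_{p,q_0}(W)\hookrightarrow M^{s+\epsilon}_{p,\infty}(W),
\]
so that it suffices to prove $M^{s+\epsilon}_{p,\infty}(W)\hookrightarrow M^{s}_{p,q_1}(W)$. Writing $b_k:=\|W^{1/p}\square_k\vec{f}\|_{L^p}$ and factoring $\langle k\rangle^{s}b_k=\langle k\rangle^{-\epsilon}\cdot \langle k\rangle^{s+\epsilon}b_k$, I would estimate (for $q_1<\infty$)
\[
\sum_{k\in\zn}\bigl(\langle k\rangle^{s}b_k\bigr)^{q_1}\;\leq\;\Bigl(\sup_{k\in\zn}\langle k\rangle^{s+\epsilon}b_k\Bigr)^{q_1}\sum_{k\in\zn}\langle k\rangle^{-\epsilon q_1},
\]
where the geometric-series-type sum $\sum_{k\in\zn}\langle k\rangle^{-\epsilon q_1}$ converges precisely because $\epsilon q_1>n$. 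The endpoint $q_1=\infty$ is even simpler, as it reduces to the trivial bound $\langle k\rangle^{s}\leq\langle k\rangle^{s+\epsilon}$.

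I do not expect a genuine obstacle: both statements reduce to standard facts about weighted $\ell^{q}$ spaces on $\zn$ once the defining sequence $\{\langle k\rangle^{s}\|W^{1/p}\square_k\vec{f}\|_{L^p}\}_{k\in\zn}$ is isolated. The only minor points to handle carefully are the endpoint values $q_0=\infty$ or $q_1=\infty$, which merely require reinterpreting $\ell^{\infty}$ as a supremum without altering the structure of the argument.
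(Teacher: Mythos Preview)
Your proposal is correct and follows essentially the same route as the paper: part (i) is the monotonicity of $\ell^q$, and part (ii) factors through $M^{s+\epsilon}_{p,\infty}(W)$ via (i) and then uses the same splitting $\langle k\rangle^{s}=\langle k\rangle^{-\epsilon}\langle k\rangle^{s+\epsilon}$ together with the convergence of $\sum_{k\in\zn}\langle k\rangle^{-\epsilon q_1}$ when $\epsilon q_1>n$. Your write-up is in fact slightly more explicit (you treat the endpoint $q_1=\infty$ separately and note that the $\mathcal{A}_p$ hypothesis is not used), but the underlying argument is identical.
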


\begin{proof}

{\rm (i)} From the  monotonicity  of  the $\ell^q$ space, we obtain the desired result.

{\rm (ii)} By {\rm (i)}, we have
\[ M^{s+\epsilon}_{p,q_0}(W)  \hookrightarrow B^{s+\epsilon,w}_{p,\infty}(W).\]
Since $\epsilon>0$, we obtain
\[\bigg( \sum_{k \in \zz^n} \langle k \rangle^{sq_1} \| W^{1/p} \cdot \square_k \vec{f}\|_{L^p}^{q_1} \bigg)^{1/q_1} \leq \sup_{k \in \zn}   \langle k \rangle^{s+\epsilon} \|W^{1/p} \cdot \square_k \vec{f}\|_{L^p} \bigg( \sum_{k \in \zz^n} \langle k \rangle^{-\epsilon q_1} \bigg)^{1/q_1}. \]
Observing that
\[\sum_{k \in \zz^n} \langle k \rangle^{-\epsilon q_1} \les \sum_{i=0}^\infty \langle i \rangle^{n-1-\epsilon q_1},\]
and the series on the right-hand side of the above inequality converges if $\epsilon q_1>n$. Thus, we have
 \[M^{s+\epsilon}_{p,\infty}(W)  \hookrightarrow M^{s}_{p,q_1}(W).\]
 Therefore, this completes the proof of {\rm (ii)}.
\end{proof}

\begin{lem}\label{w-em1}
Let $p \in [1,\infty)$ and $W\in \mathcal{A}_{p}.$ Then $ M^{0}_{p,1} (W) \hookrightarrow L^{p}(W ) \hookrightarrow M^{0}_{p,\infty}(W)$.
\end{lem}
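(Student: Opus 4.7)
The plan is to prove the two inclusions independently, each by a short reduction to facts already at hand.

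For the right inclusion $L^p(W) \hookrightarrow M^0_{p,\infty}(W)$, I will apply Lemma \ref{wm-L1} directly. Since $W\in\mathcal{A}_p$ and $\{\phi_k\}_{k\in\zn}\in Y$ has uniform size and smoothness bounds, that lemma supplies a constant $C>0$, independent of $k$, such that $\|\square_k\vec{f}\|_{L^p(W)}\le C\|\vec{f}\|_{L^p(W)}$ for every $k\in\zn$. Taking the supremum on $k$, together with $\langle k\rangle^0=1$, immediately yields $\|\vec{f}\|_{M^0_{p,\infty}(W)}\le C\|\vec{f}\|_{L^p(W)}$.

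For the left inclusion $M^0_{p,1}(W) \hookrightarrow L^p(W)$, I will use the partition-of-unity identity $\sum_{k\in\zn}\phi_k\equiv 1$ from (\ref{wm-4}), which gives the reconstruction $\vec{f}=\sum_{k\in\zn}\square_k\vec{f}$ in $[\cs'(\rn)]^m$. For $\vec{f}\in M^0_{p,1}(W)$, absolute convergence is built in, since $\sum_k\|\square_k\vec{f}\|_{L^p(W)}=\|\vec{f}\|_{M^0_{p,1}(W)}<\infty$. Completeness of the Banach space $L^p(W)$ then produces a limit $\vec{g}\in L^p(W)$ with $\|\vec{g}\|_{L^p(W)}\le\|\vec{f}\|_{M^0_{p,1}(W)}$, and the continuous embedding $L^p(W)\hookrightarrow[\cs'(\rn)]^m$ forces $\vec{g}=\vec{f}$ as distributions, so the desired estimate follows.

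The only non-routine step is the identification of the $L^p(W)$-limit of the partial sums with the distributional sum $\vec{f}$, which rests on the continuous inclusion $L^p(W)\hookrightarrow[\cs'(\rn)]^m$. This will follow from the $\mathcal{A}_p$ hypothesis (so that $W^{-p'/p}\in\mathcal{A}_{p'}$ and $W^{-1/p}$ has enough local integrability) combined with H\"{o}lder's inequality applied to the pairing $\langle\vec{f},\psi\rangle$ against a Schwartz test vector $\psi$: the test factor $\int\|W^{-1/p}\psi\|^{p'}\,dx$ is plainly finite, controlling $L^p(W)$-convergence by convergence in $[\cs'(\rn)]^m$. Everything else is the Banach-space triangle inequality plus the boundedness already supplied by Lemma \ref{wm-L1}.
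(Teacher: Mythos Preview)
Your proposal is correct and follows essentially the same approach as the paper's own proof: the right embedding comes directly from Lemma \ref{wm-L1}, and the left embedding from the partition-of-unity reconstruction $\vec{f}=\sum_k\square_k\vec{f}$ together with the triangle inequality. The paper writes the second step in one line as $\|\vec{f}\|_{L^p(W)}\le\sum_k\|\square_k\vec{f}\|_{L^p(W)}$, whereas you add the extra care of identifying the $L^p(W)$-limit with the distributional sum via the embedding $L^p(W)\hookrightarrow[\cs'(\rn)]^m$; this is a legitimate refinement of the same argument rather than a different route.
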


\begin{proof}
 By Lemma \ref{wm-L1}, we obtain
\[ \|\vec{f}\|_{M^{0,w}_{p,\infty}} =\sup_{k \in \zn} \| \cf^{-1} \phi_k \cf \vec{f}\|_{L^{p}(W)} \leq C \| \vec{f}\|_{L^{p}(W)},\]
On the other hand, we estimate that
\[ \| \vec{f}\|_{L^{p}(W)} \leq \sum_{k \in \zn} \| \cf^{-1} \phi_k \cf \vec{f}\|_{L^{p}(W)}= \|\vec{f}\|_{M^0_{p,1}(W)}.\]
This completes the proof.
\end{proof}

\section{Duality}\label{mwm-s6}
In this section, we combine the results of Section \ref{mwm-s2} to give the duality of $M^{s}_{p,q}(W)$.
Let us define
\[ \ell^q_s (\zn, L^p(W)) := \big\{  \{\vec{f_k}\}_{k \in \zn} : \|\{\vec{f_k}\}_{k \in \zn}\|_{\ell^{q}(L^p(W))} < \infty \big\} ,\]
where
\[ \|\{\vec{f_k}\}_{k \in \zn}\|_{\ell^{q}(L^p(W))} = \bigg( \sum_{k \in \zn} \langle k \rangle^{sq} \|\vec{f}_k \|^q_{L^p(W)} \bigg)^{1/q} .\]

\begin{lem}[see {\cite[Proposition 2.3]{nm-1}}]\label{mwm-em3}
If $1\leq p <\infty$, $0 < q \leq \infty$, $s \in \rr$ and $W$ be a  $\mathcal{A}_p$ matrix, then
\[ [\cs(\rn)]^m \hookrightarrow M^{s}_{p,q}(W) \hookrightarrow  [\cs'(\rn)]^m.\]
Furthermore, $M^{s}_{p,q}(W)$ is a quasi-Banach space and $[\cs(\rn)]^m$  is dense in  $M^{s}_{p,q}(W)$.
\end{lem}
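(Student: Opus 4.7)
The plan is to prove the four assertions in sequence: the two continuous embeddings, completeness, and density.

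For the embedding $[\cs(\rn)]^m \hookrightarrow M^s_{p,q}(W)$, I would exploit rapid decay. If $\vec{f} \in [\cs(\rn)]^m$ then $\widehat{\vec{f}}$ is Schwartz, and since $\phi_k$ is supported in $B(k,\sqrt{n})$, repeated integration by parts gives the pointwise estimate $|\square_k \vec{f}(x)| \leq C_{N,M} \langle k \rangle^{-N} (1+|x|)^{-M}$ for every $N, M$. Since $W \in \mathcal{A}_p$, a standard consequence (see, e.g., \cite{mcim1}) is that $\int_{B(0,R)} \|W^{1/p}(x)\|^{p}\,{\rm d}x$ grows at most polynomially in $R$, so integrating the pointwise estimate against this weight gives $\|\square_k \vec{f}\|_{L^p(W)} \lesssim \langle k \rangle^{-N}$ for arbitrary $N$. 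This yields the continuity, with each seminorm on the right expressible via finitely many Schwartz seminorms of $\vec{f}$.

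For $M^s_{p,q}(W) \hookrightarrow [\cs'(\rn)]^m$, the key is to estimate the dual pairing $\langle \vec{f}, \vec{\varphi}\rangle$ for $\vec{\varphi}\in [\cs]^m$. Write $\vec{\varphi} = \sum_l \square_l \vec{\varphi}$. By the support condition on the $\phi_k$, only pairs with $|k-l|\lesssim 1$ contribute, so
\[|\langle \vec{f}, \vec{\varphi}\rangle| \lesssim \sum_{k \in \zn} \sum_{l \in \Lambda} |\langle \square_k \vec{f}, \square_{k+l} \vec{\varphi}\rangle|.\]
Each term is bounded via the weighted H\"older inequality by $\|W^{1/p}\square_k \vec{f}\|_{L^p}\,\|W^{-1/p}\square_{k+l} \vec{\varphi}\|_{L^{p'}}$. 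Since $W\in \mathcal{A}_p$ implies $W^{-p'/p}\in \mathcal{A}_{p'}$ (whence it has polynomial growth in the same averaged sense), and $\square_{k+l}\vec{\varphi}$ decays rapidly in $k$ by the first paragraph's argument applied to $\vec{\varphi}$, the second factor contributes a rapidly decreasing sequence in $k$. H\"older in $\ell^q$ on the remaining sum then bounds $|\langle \vec{f}, \vec{\varphi}\rangle|$ by $\|\vec{f}\|_{M^s_{p,q}(W)}$ times finitely many Schwartz seminorms of $\vec{\varphi}$.

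For completeness, I would take a Cauchy sequence $\{\vec{f}_n\}$ in $M^s_{p,q}(W)$; by the second embedding it converges in $[\cs'(\rn)]^m$ to some $\vec{f}$. Continuity of $\square_k$ on $[\cs']^m$ gives $\square_k \vec{f}_n \to \square_k \vec{f}$ distributionally, from which a Fatou-type lower-semicontinuity argument in the weighted $\ell^q(L^p(W))$ norm shows $\vec{f}\in M^s_{p,q}(W)$ and $\vec{f}_n \to \vec{f}$ in norm. For density (assuming $q<\infty$; the case $q=\infty$ is classically not a density statement and I expect the lemma is implicitly restricted), I approximate $\vec{f}$ first by the truncation $\sum_{|k|\le N} \square_k \vec{f}$, whose $M^s_{p,q}(W)$-tail shrinks to $0$ by the $\ell^q$ condition, and then approximate each $\square_k \vec{f}$, which has compact Fourier support, by Schwartz functions in $L^p(W)$ using that $\cs$ is dense in $L^p(W)$ whenever $W\in\mathcal{A}_p$.

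The main obstacle, as I see it, is the pairing estimate underlying the embedding into $[\cs']^m$: one has to convert between the natural norm $\|W^{1/p}\cdot\|_{L^p}$ and its dual counterpart $\|W^{-1/p}\cdot\|_{L^{p'}}$ for the Schwartz test function, and then combine rapid decay of $\square_l \vec{\varphi}$ with the averaged polynomial growth of $W^{-p'/p}$ in a uniform way across all $k$. Everything else is then a routine packaging argument, and the completeness follows once the two embeddings are established.
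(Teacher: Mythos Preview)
The paper does not prove this lemma at all: it is stated with the attribution ``see \cite[Proposition 2.3]{nm-1}'' and no argument is given. So there is no in-paper proof to compare your proposal against; the authors simply import the result from Nielsen's paper.

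Your outline is a reasonable sketch of the standard route to such a statement (rapid decay of $\square_k\vec f$ for Schwartz $\vec f$ combined with polynomial operator-norm growth of an $\mathcal A_p$ weight; weighted H\"older for the dual embedding; Fatou for completeness; truncation plus $L^p(W)$-approximation for density). Two small caveats: first, the endpoint $p=1$ needs separate handling in your dual-embedding step, since $p'=\infty$ and the ``$W^{-p'/p}\in\mathcal A_{p'}$'' manoeuvre is not available as written---one typically uses the $\mathcal A_1$ condition directly to control $\|W^{-1}(\cdot)\|$ pointwise by local averages. Second, as you already flag, density of $[\cs(\rn)]^m$ genuinely fails for $q=\infty$, so the lemma's density clause should be read under the tacit restriction $q<\infty$ (this is also how Nielsen states it).
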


\begin{lem}\label{wm-L3}
Let $1<p,$ $q < \infty$, $s \in \rr$ and $W$ be a $\mathcal{A}_p$ matrix.  Then there exists a constant $C>0$ such that
\[\sum_{k \in \zn} \int_{\rn} |\vec{f}_k(x)\vec{g}_k(x)|{\rm d}x \leq
 C\|\{\vec{f}_k\}_{k \in \zn}\|_{\ell^{q}(L^{p}(\{A_Q\}))} \|\{\vec{g}_k\}_{k \in \zn}\|_{\ell^{q^{\prime}}(L^{p^{\prime}} (\{A_Q^{-1}\}))} \]
 holds for $\{\vec{f}_k\}_{k \in \zn}$ and $\{\vec{g}_k\}_{k \in \zn}$ be sequences of locally Lebesgue
integrable functions satisfying $\|\{\vec{f}_k\}_{k \in \zn}\|_{\ell^{q}(L^{p}(\{A_Q\}))} < \infty$
and $\|\{\vec{g}_k\}_{k \in \zn}\|_{\ell^{q^{\prime}}(L^{p^{\prime}} (\{A_Q^{-1}\}))} < \infty$.
\end{lem}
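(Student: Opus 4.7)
The plan is a two-level Hölder argument: one level in $L^p$ (for fixed $k$) and one level in $\ell^q$ (summing over $k$), together with the elementary but crucial algebraic identity that $A_Q$ and $A_Q^{-1}$ cancel against each other under the Hermitian inner product of $\cc^m$. The statement of the lemma has no additional hypothesis on $\vec{f}_k\cdot\vec{g}_k$, so I interpret this expression as the pointwise Hermitian pairing $\langle \vec{f}_k(x),\vec{g}_k(x)\rangle$; any bound for this version gives the stated inequality by Cauchy--Schwarz on $\cc^m$.

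First, fix $k\in\zn$ and use the partition $\mathcal{Q}_k=\bigcup_\ell Q(k,\ell)$ of $\rn$. On each cube $Q=Q(k,\ell)$, the reducing operator $A_Q$ is positive definite and hence self-adjoint, so $A_Q^{*}A_Q^{-1}=I_m$ gives the pointwise identity
\[
|\langle \vec{f}_k(x),\vec{g}_k(x)\rangle|=|\langle A_Q\vec{f}_k(x),\,A_Q^{-1}\vec{g}_k(x)\rangle|\leq |A_Q\vec{f}_k(x)|\,|A_Q^{-1}\vec{g}_k(x)|.
\]
Integrating over $Q$ and applying the scalar Hölder inequality with exponents $(p,p')$ yields
\[
\int_Q|\vec{f}_k\cdot\vec{g}_k|\,{\rm d}x\leq\Bigl(\int_Q|A_Q\vec{f}_k|^p\,{\rm d}x\Bigr)^{1/p}\Bigl(\int_Q|A_Q^{-1}\vec{g}_k|^{p'}\,{\rm d}x\Bigr)^{1/p'}.
\]

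Next, I would sum over $\ell$ and apply Hölder once more with the same exponents $(p,p')$. Because the cubes $Q(k,\ell)$ are pairwise disjoint, the resulting expressions
$\bigl(\sum_\ell\int_{Q(k,\ell)}|A_{Q(k,\ell)}\vec{f}_k|^p\bigr)^{1/p}$ and the analogous one for $\vec{g}_k$ collapse precisely into $\|\vec{f}_k\|_{L^p(\{A_Q\},k)}$ and $\|\vec{g}_k\|_{L^{p'}(\{A_Q^{-1}\},k)}$ by Definition 3.3. Hence
\[
\int_{\rn}|\vec{f}_k\cdot\vec{g}_k|\,{\rm d}x\leq \|\vec{f}_k\|_{L^p(\{A_Q\},k)}\,\|\vec{g}_k\|_{L^{p'}(\{A_Q^{-1}\},k)}.
\]

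Finally, summing over $k\in\zn$ and inserting the balancing weights $\langle k\rangle^s$ and $\langle k\rangle^{-s}$, a third application of Hölder with exponents $(q,q')$ gives the claimed bound
\[
\sum_{k\in\zn}\int_{\rn}|\vec{f}_k\cdot\vec{g}_k|\,{\rm d}x\leq C\,\|\{\vec{f}_k\}\|_{\ell^q(L^p(\{A_Q\}))}\,\|\{\vec{g}_k\}\|_{\ell^{q'}(L^{p'}(\{A_Q^{-1}\}))}.
\]
The only non-routine point is the self-adjoint cancellation $\langle A_Q\vec{u},A_Q^{-1}\vec{v}\rangle=\langle\vec{u},\vec{v}\rangle$; the rest is two nested Hölder inequalities. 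A minor bookkeeping issue is that the side length $(ar_k)^{-1}$ of the cubes varies with $k$, but the argument above is carried out at each $k$ separately, so this causes no difficulty. No hypothesis on $W$ itself (e.g.\ $\mathcal{A}_p$ or doubling) is actually used at this stage; the reducing operators simply play the role of a constant matrix on each cube.
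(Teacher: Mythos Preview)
Your argument is correct and follows essentially the same route as the paper: self-adjointness of the reducing operators to write $\langle \vec{f}_k,\vec{g}_k\rangle=\langle A_Q\vec{f}_k,A_Q^{-1}\vec{g}_k\rangle$, then Cauchy--Schwarz in $\cc^m$, then H\"older in $L^p$ and in $\ell^q$. Your version is in fact more explicit than the paper's (you spell out the partition into cubes $Q(k,\ell)$ and the two H\"older applications), and your observation that neither the $\mathcal{A}_p$ hypothesis nor the parameter $s$ is actually needed here is accurate.
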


\begin{proof}
By the self-adjointness, the Cauchy-Schwarz inequality and H\"{o}lder's inequality, we have
\begin{align*}
\sum_{k \in \zn} \int_{\rn} |\vec{f}_k(x)\vec{g}_k(x)|{\rm d}x
&= \sum_{k \in \zn} \int_{\rn} |A_Q^{-1} A_Q \vec{f}_k(x)\vec{g}_k(x)|{\rm d}x \\
&\les \sum_{k \in \zn} \| \vec{f}_k\|_{L^p(\{A_Q\})} \|\vec{g}_k\|_{(L^{p^{\prime}}(\{A_Q^{-1}\}))}\\
&\les \|\{\vec{f}_k\}_{k \in \zn}\|_{\ell^{q}(L^{p}(\{A_Q\}))} \|\{\vec{g}_k\}_{k \in \zn}\|_{\ell^{q^{\prime}}(L^{p^{\prime}}(\{A_Q^{-1}\}))}.
\end{align*}
This completes the proof.
\end{proof}
\begin{prop}\label{wm-P1}
Let $1<p,$ $q < \infty$, $s \in \rr$ and $W$ be a $\mathcal{A}_p$ matrix. Then
\[ (\ell^q_s (\zn, L^p(\{A_Q\})))^{\prime} = \ell^{q^{\prime}}_{-s} (\zn, L^{p^{\prime}}(\{A_Q^{-1}\})).\]
Furthermore, $\{\vec{g_k}\}_{k \in \zn}\in  (\ell^q_s (\zn, L^p(\{A_Q\})))^{\prime}$  is equivalent to
\[ \langle \{\vec{g_k}\}_{k \in \zn},\{\vec{f_k}\}_{k \in \zn} \rangle = \sum_{k \in \zn} \int_{\rn} \vec{g}_k(x) \vec{f}_k(x) {\rm d}x \]
for all $\{\vec{f}_k\}_{k \in \zn} \in \ell^q_s (\zn, L^p(\{A_Q\})$, where
\[ \{\vec{g}_k\}_{k \in \zn} \in \ell^{q^{\prime}}_{-s} (\zn, L^{p^{\prime}}(\{A_Q^{-1}\})), \quad \|\vec{g} \|_{(\ell^q_s (\zn, L^p(\{A_Q\})))^{\prime}}  = \big\| \{\vec{g}_k\} \big\|_{\ell^{q^{\prime}}_{-s}(L^{p^{\prime}}(\{A_Q^{-1}\}))}.\]
\end{prop}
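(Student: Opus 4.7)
The plan is to prove the duality by the standard two-inclusion argument for sequence spaces valued in Banach spaces. The easy direction is immediate from Lemma \ref{wm-L3}: every $\{\vec{g}_k\}_{k\in\zn}\in\ell^{q'}_{-s}(\zn,L^{p'}(\{A_Q^{-1}\}))$ defines a linear functional $\Lambda_{\{\vec{g}_k\}}(\{\vec{f}_k\}):=\sum_k\int_{\rn}\vec{f}_k(x)\vec{g}_k(x)\,dx$ on $\ell^q_s(\zn,L^p(\{A_Q\}))$ whose norm is bounded above by $\|\{\vec{g}_k\}\|_{\ell^{q'}_{-s}(L^{p'}(\{A_Q^{-1}\}))}$.

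For the reverse inclusion, I would first establish the pointwise duality $(L^p(\{A_Q\},k))'=L^{p'}(\{A_Q^{-1}\},k)$ via the natural $\cc^m$-valued pairing. The key observation is that the map $\Phi_k:\vec{f}\mapsto\sum_{Q:\ell(Q)=(ar_k)^{-1}}A_Q\vec{f}\chi_Q$ is an isometric isomorphism from $L^p(\{A_Q\},k)$ onto the vector-valued $L^p(\rn;\cc^m)$, with inverse $\vec{h}\mapsto\sum_{Q} A_Q^{-1}\vec{h}\chi_Q$ (using that each $A_Q$ is invertible as a positive definite matrix). Composing any bounded functional on $L^p(\{A_Q\},k)$ with $\Phi_k^{-1}$ and applying the Riesz representation theorem on $L^{p}(\rn;\cc^m)$ produces $\vec{h}\in L^{p'}(\rn;\cc^m)$. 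Using the self-adjointness of each $A_Q$, one then verifies that $\vec{g}_k:=\sum_{Q} A_Q\vec{h}\chi_Q$ represents the functional and satisfies $\|\vec{g}_k\|_{L^{p'}(\{A_Q^{-1}\},k)}=\|\vec{h}\|_{L^{p'}}$, giving the claimed isometric identification at the level of each index $k$.

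Next, given a bounded functional $T$ on $\ell^q_s(\zn,L^p(\{A_Q\}))$, for each $k$ I restrict $T$ to sequences supported only at index $k$ to get a functional on $L^p(\{A_Q\},k)$ of norm at most $\langle k\rangle^s\|T\|$; the previous step yields a representer $\vec{g}_k\in L^{p'}(\{A_Q^{-1}\},k)$, and on finitely supported sequences $T(\{\vec{f}_k\})=\sum_k\int_{\rn}\vec{f}_k(x)\vec{g}_k(x)\,dx$. To control $\|\{\vec{g}_k\}\|_{\ell^{q'}_{-s}(L^{p'}(\{A_Q^{-1}\}))}$ by $\|T\|$, I would dualize in the $\ell^{q'}$ variable: for each $k$ in a finite set $F$, choose $\vec{f}_k$ that nearly achieves the pointwise duality $\int\vec{f}_k\vec{g}_k\,dx\approx\|\vec{f}_k\|_{L^p(\{A_Q\})}\|\vec{g}_k\|_{L^{p'}(\{A_Q^{-1}\})}$, then pick scalar weights in $k$ that nearly attain the $\ell^{q'}$--$\ell^q$ duality for the scalar sequence $\{\langle k\rangle^{-s}\|\vec{g}_k\|_{L^{p'}(\{A_Q^{-1}\})}\}_{k\in F}$. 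Passing to the supremum over finite $F$ yields $\|\{\vec{g}_k\}\|_{\ell^{q'}_{-s}(L^{p'}(\{A_Q^{-1}\}))}\le\|T\|$. The representation on the full space then follows by density of finitely supported sequences, which is valid since $1<p,q<\infty$.

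The main obstacle is the pointwise duality step, where the matrix-valued nature of the reducing operators must be handled carefully: the proper use of the $\cc^m$-valued Riesz representation together with the self-adjointness and invertibility of each positive definite $A_Q$ is precisely what makes the adjoint of $\Phi_k$ produce $A_Q^{-1}$ on the dual side. Once this isometric identification is in place, the passage to the full sequence-space duality is the standard argument for the dual of a Bochner-type $\ell^q$-space with values in a Banach space, together with the simultaneous near-optimization trick in both the $L^{p'}$ and $\ell^{q'}$ variables.
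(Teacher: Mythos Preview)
Your proposal is correct and follows essentially the same two-inclusion strategy as the paper: the easy inclusion via Lemma \ref{wm-L3}, restriction of a bounded functional to each coordinate $k$ to produce $\vec{g}_k$, and an extremizer argument to bound $\|\{\vec{g}_k\}\|_{\ell^{q'}_{-s}(L^{p'}(\{A_Q^{-1}\}))}$ by the operator norm. The one noteworthy difference is your treatment of the pointwise duality $(L^p(\{A_Q\},k))'=L^{p'}(\{A_Q^{-1}\},k)$: you reduce it cleanly to the unweighted Riesz representation on $L^p(\rn;\cc^m)$ via the isometry $\Phi_k$ and then read off the appearance of $A_Q^{-1}$ from self-adjointness, whereas the paper writes down an explicit extremizer $\vec{f}_k$ whose formula involves expressions like $\vec{g}'_k(x)^{q'}$ and $A_Q^{-q'}$ that are awkward (and not literally well-defined) for vector-valued functions and matrices. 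Your route buys a rigorous and transparent justification of this step at essentially no extra cost; otherwise the arguments coincide.
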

\begin{proof}
By Lemma \ref{wm-L3}, we have $(\ell^q_s (\zn, L^p(\{A_Q\})))^{\prime} \supset \ell^{q^{\prime}}_{-s} (\zn, L^{p^{\prime}}(\{A_Q^{-1}\}))$ and
\[ \langle \{\vec{g_k}\}_{k \in \zn}, \{\vec{f_k}\}_{k \in \zn} \rangle =\sum_{k \in \zn} \int_{\rn} \vec{g}_k(x) \vec{f}_k(x) {\rm d}x\]
for all $\{\vec{f}_k\}_{k \in \zn}  \in \ell^q_s (\zn, L^p(\{A_Q\})))$, $\{\vec{g}_k\}_{k\in \zn} \in \ell^{q^{\prime}}_{-s} (\zn, L^{p^{\prime}}(\{A_Q^{-1}\}))$.
For any $\vec{g}_k \in (\ell^q_s (\zn, L^p(\{A_Q\})))^{\prime}$, we denote
\[ \langle \vec{g}_k, \vec{f}_k \rangle = \langle \vec{g}, (0,0,\cdots, \vec{f}_k,0,0,\cdots) \rangle, \quad \{\vec{f}_k\}_{k \in \zn}  \in \ell^q_s (\zn, L^p(\{A_Q\}))). \]
It follows that $\vec{g}_k \in (L^p(\{A_Q\}))^{\prime}=L^{p^{\prime}}(\{A_Q^{-1}\})$, hence
\[ \langle \vec{g}_k, \vec{f}_k \rangle =   \int_{\rn} \vec{g}_k(x) \vec{f}_k(x){\rm d}x . \]
Thus,
\[ \langle \{\vec{g_k}\}_{k \in \zn}, \{\vec{f_k}\}_{k \in \zn} \rangle =  \sum_{k \in \zn} \int_{\rn} \vec{g}_k(x) \vec{f}_k(x){\rm d}x . \]
 Then $\vec{g}_k \in L^{p^{\prime}}(\{A_Q^{-1}\})$. We put $\vec{g}^{\prime}_k (x) = \vec{g}_k(x)/\lambda$,
where
\[\lambda = \big\|\{ \langle k \rangle^{-s} \vec{g}_k\}_{|k| \leq N}\big\|_{\ell^{q^{\prime}}(L^{p^{\prime}}(\{A_Q^{-1}\}))}.\]
 We set
\begin{align*}
\vec{f}_k&=\operatorname{sgn}\vec{g}_k \cdot | \langle k \rangle^{-sq^{\prime}} \vec{g}^{\prime}_k(x)^{q^{\prime}} A_Q^{-q^{\prime}}|^{\frac{p^{\prime}}{q^{\prime}} - \frac{1}{q^{\prime}}} \langle k \rangle^{-s} A_Q^{-1} \| \langle k \rangle^{-sq^{\prime}}\vec{g}^{\prime}_k(\cdot)^{q^{\prime}}A_Q^{-q^{\prime}}\|^{1-\frac{p^{\prime}}{q^{\prime}}}_{L^{\frac{p^{\prime}}{q^{\prime}}}}
\end{align*}
for each $k \in \zn$. Since
\begin{align*}
&\int_{\rn} \bigg(\frac{| \langle k \rangle^{sq} A_Q^{q} \vec{f}_k(x)^{q}|} {\| \langle k \rangle^{-sq^{\prime}} A_Q^{-q^{\prime}} \vec{g}^{\prime}_k(\cdot)^{q^{\prime}} \|_{L^{\frac{p^{\prime}}{q^{\prime}}}}}\bigg)^{\frac{p}{q}} {\rm d}x \\
&=\int_{\rn} \frac{| \langle k \rangle^{-sq^{\prime}} A_Q^{-q^{\prime}} \vec{g}^{\prime}_k(x)^{q^{\prime}} |^{\frac{p^{\prime}}{q^{\prime}}}
\| \langle k \rangle^{-sq^{\prime}}A_Q^{-q^{\prime}}\vec{g}^{\prime}_k(\cdot)^{q^{\prime}}\|^{\frac{p}{q}-\frac{p^{\prime}}{q^{\prime}}}_{L^{\frac{p^{\prime}}{q^{\prime}}}}} {\| \langle k \rangle^{-sq^{\prime}}A_Q^{-q^{\prime}}\vec{g}^{\prime}_k(\cdot)^{q^{\prime}} \|^{\frac{p}{q}}_{L^{\frac{p^{\prime}}{q^{\prime}}}}} {\rm d}x \\
&=\int_{\rn} \bigg(\frac{| \langle k \rangle^{-sq^{\prime}} \vec{g}^{\prime}_k(x)^{q^{\prime}} A_Q^{-q^{\prime}}|} {\| \langle k \rangle^{-sq^{\prime}}A_Q^{-q^{\prime}}\vec{g}^{\prime}_k(\cdot)^{q^{\prime}} \|_{L^{\frac{p^{\prime}}{q^{\prime}}}}}\bigg)^{\frac{p^{\prime}}{q^{\prime}}} {\rm d}x \\
&=1,
\end{align*}
so we have
\[\| \langle k \rangle^{sq} A_Q^{q} \vec{f}_k(\cdot)^{q}\|_{L^{\frac{p}{q}}}
= \|\langle k \rangle^{-sq^{\prime}}A_Q^{-q^{\prime}}\vec{g}^{\prime}_k(\cdot)^{q^{\prime}} \|_{L^{\frac{p^{\prime}}{q^{\prime}}}} .\]
Hence, we have
\[\|\{\langle k \rangle^{s} \vec{f}_k\}^N_{|k|=0}\|_{\ell^{q}(L^{p}(\{A_Q\}))} \leq 1.\]
We gain
\[\int_{\rn}\sum^N_{|k|=0} \vec{g}^{\prime}_k \vec{f}_k {\rm d}x =\sum^N_{|k|=0}\| \langle k \rangle^{-sq^{\prime}}A_Q^{-q^{\prime}}\vec{g}^{\prime}_k(\cdot)^{q^{\prime}}\|_{L^{\frac{p^{\prime}}{q^{\prime}}}}=1 .\]
Thus, we have
\[ \big\|\{ \langle k \rangle^{-s} \vec{g}_k\}^N_{|k|=0} \big\|_{\ell^{q^{\prime}}(L^{p^{\prime}}(\{A_Q^{-1}\})} \lesssim \|\{\vec{g_k}\}_{k \in \zn}\|_{(M^{s}_{p,q}(\{A_Q\}))^{\prime}} .\]
Therefore,
\[\{\vec{g_k}\}_{k \in \zn} \in M^{-s}_{p^{\prime},q^{\prime}}(\{A_Q^{-1}\}) \text{ and }  \|\{\vec{g_k}\}_{k \in \zn}\|_{M^{-s}_{p^{\prime},q^{\prime}}(\{A_Q^{-1}\})}
\lesssim \|\{\vec{g_k}\}_{k \in \zn}\|_{(M^{s}_{p,q} (\{A_Q\}))^{\prime}} .\]
This completes the proof.
\end{proof}

\begin{thm}\label{wm-du1}
Let $1<p,$ $q < \infty$, $s \in \rr$ and $W$ be a $\mathcal{A}_p$ matrix. Then
\begin{equation}\label{mwm-6}
\big(M^{s}_{p,q}(W) \big)^{\prime} =  M^{-s}_{p^{\prime},q^{\prime}}(W^{-p^{\prime}/p}).
\end{equation}
\end{thm}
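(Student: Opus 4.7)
The strategy is to reduce the duality to the sequence-space duality already established in Proposition~\ref{wm-P1}, using Lemma~\ref{mwm-L2} to trade the matrix weight for its reducing operators. Since $W\in\mathcal{A}_p$ implies $W$ is a doubling matrix weight of order $p$, Lemma~\ref{mwm-L2} gives $M^{s}_{p,q}(W)=M^{s}_{p,q}(\{A_Q\})$ with equivalent norms. A standard matrix-weight computation (using the John-ellipsoid representation of $A_Q$) shows that $\{A_Q^{-1}\}_{Q\in\mathcal{Q}}$ is a sequence of reducing operators of order $p'$ for $W^{-p'/p}$, and the classical matrix $\mathcal{A}_p$ duality $W\in\mathcal{A}_p\Leftrightarrow W^{-p'/p}\in\mathcal{A}_{p'}$ lets me apply Lemma~\ref{mwm-L2} once more to get $M^{-s}_{p',q'}(W^{-p'/p})=M^{-s}_{p',q'}(\{A_Q^{-1}\})$. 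Hence it suffices to prove
\[
\bigl(M^{s}_{p,q}(\{A_Q\})\bigr)'=M^{-s}_{p',q'}(\{A_Q^{-1}\}).
\]

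Next, I would realize $M^{s}_{p,q}(\{A_Q\})$ as a retract of $\ell^q_s(\zn,L^p(\{A_Q\}))$. Define the lifting $L\vec{f}:=\{\square_k\vec{f}\}_{k\in\zn}$ and the synthesis $R\{\vec{f}_k\}:=\sum_{k\in\zn}\square_k\vec{f}_k$. By Definition~\ref{mwm-D3}, $L$ is an isomorphism of $M^{s}_{p,q}(\{A_Q\})$ onto a closed subspace of $\ell^q_s(\zn,L^p(\{A_Q\}))$. By Theorem~\ref{w-eq1} (transferred to the $\{A_Q\}$-setting via Lemma~\ref{mwm-L2}), $R$ is bounded, and since $\sum_{k}\phi_k\equiv 1$ forces $\vec{f}=\sum_k\square_k\vec{f}$ in $[\cs'(\rn)]^m$, we have $R\circ L=\mathrm{id}$.

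Given $T\in(M^{s}_{p,q}(\{A_Q\}))'$, the functional $T\circ R$ belongs to $(\ell^q_s(\zn,L^p(\{A_Q\})))'$ with comparable norm, so Proposition~\ref{wm-P1} produces a unique $\{\vec{g}_k\}\in\ell^{q'}_{-s}(\zn,L^{p'}(\{A_Q^{-1}\}))$ with
\[
T(R\{\vec{f}_k\})=\sum_{k\in\zn}\int_{\rn}\vec{f}_k(x)\cdot\vec{g}_k(x)\,{\rm d}x
\]
and $\|\{\vec{g}_k\}\|\lesssim\|T\|$. Choosing $\vec{f}\in[\cs(\rn)]^m$ (dense by Lemma~\ref{mwm-em3}), setting $\vec{f}_k=\square_k\vec{f}$, using $RL=\mathrm{id}$, and invoking the self-adjointness of $\square_k$ in the $L^2$-pairing, I obtain
\[
T(\vec{f})=\int_{\rn}\vec{f}(x)\cdot\vec{g}(x)\,{\rm d}x,\qquad\vec{g}:=\sum_{k\in\zn}\square_k\vec{g}_k.
\]
Theorem~\ref{w-eq1} applied with the dual indices $(p',q',-s)$ then shows $\vec{g}\in M^{-s}_{p',q'}(\{A_Q^{-1}\})$ with $\|\vec{g}\|\lesssim\|T\|$. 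Conversely, any such $\vec{g}$ induces a bounded functional on $M^{s}_{p,q}(\{A_Q\})$ via the same pairing: applying Lemma~\ref{wm-L3} to the decompositions $\{\square_k\vec{f}\}$ and $\{\square_k\vec{g}\}$ (combined with the finite support-overlap estimate $\sum_{l\in\Lambda}\phi_k\phi_{k+l}$ exploited in Lemma~\ref{wm-L2}) gives $|T_{\vec{g}}(\vec{f})|\lesssim\|\vec{f}\|_{M^{s}_{p,q}(\{A_Q\})}\|\vec{g}\|_{M^{-s}_{p',q'}(\{A_Q^{-1}\})}$. The two norm comparisons complete \eqref{mwm-6}.

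The main obstacle is the matrix-weight bookkeeping, namely verifying that $\{A_Q^{-1}\}$ really is a reducing sequence of order $p'$ for $W^{-p'/p}$ (so that Lemma~\ref{mwm-L2} applies on the dual side), together with justifying that $\vec{g}=\sum_{k}\square_k\vec{g}_k$ converges unconditionally in $[\cs'(\rn)]^m$ and produces an honest element of the stated modulation space. Once these technicalities are handled, the proof is a clean Hahn-Banach/retract argument atop Proposition~\ref{wm-P1} and Theorem~\ref{w-eq1}.
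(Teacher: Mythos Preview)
Your proposal is correct and follows essentially the same route as the paper: reduce via Lemma~\ref{mwm-L2} to the $\{A_Q\}$-spaces, embed $M^{s}_{p,q}(\{A_Q\})$ into $\ell^q_s(\zn,L^p(\{A_Q\}))$ through $\vec{f}\mapsto\{\square_k\vec{f}\}$, invoke Proposition~\ref{wm-P1} to represent the functional by a sequence $\{\vec{g}_k\}$, and then use Theorem~\ref{w-eq1} (with dual indices) to recognize $\vec{g}=\sum_k\square_k\vec{g}_k$ as an element of $M^{-s}_{p',q'}(\{A_Q^{-1}\})$; the converse embedding is handled via the finite-overlap H\"older estimate of Lemma~\ref{wm-L3}. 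Your only addition is that you explicitly isolate the step the paper leaves implicit---namely that $\{A_Q^{-1}\}$ serves as a reducing sequence of order $p'$ for $W^{-p'/p}$ so that Lemma~\ref{mwm-L2} applies on the dual side---which is indeed needed to pass from $M^{-s}_{p',q'}(\{A_Q^{-1}\})$ back to $M^{-s}_{p',q'}(W^{-p'/p})$.
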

\begin{proof}
We want to show (\ref{mwm-6}), let $\{A_Q\}_{Q \in \mathcal{Q}}$ be a sequence of reducing operators of order $p$ for $W$,  and we can see from Lemma \ref{mwm-L2} that we just need to show
\[\big(M^{s}_{p,q}(\{A_Q\}) \big)^{\prime} \sim M^{-s}_{p^{\prime},q^{\prime}}(\{A_Q^{-1}\}).\]
Step 1. Let $\vec{f} \in [\mathcal{S}(\rn)]^m$ and $\vec{g} \in M^{-s}_{p^{\prime},q^{\prime}}(A^{-1}_Q)$. By the self-adjointness, the Cauchy-Schwarz inequality and the H\"{o}lder inequality, we have
\begin{align*}
  \langle \vec{g},\vec{f} \rangle
  &=\sum_{k \in \zn} \sum_{l \in \Lambda}  \langle \square^{\ast}_{k+l} \vec{g},\square_k \vec{f} \rangle \\
   &=\sum_{k \in \zn} \sum_{l \in \Lambda} \langle k\rangle^s \langle k\rangle^{-s} \int_{\rn} A_Q A_Q^{-1}  \square_{k+l}^\ast \vec{g}\square_k \vec{f} {\rm d}x \\
   & \les \sum_{k \in \zn} \sum_{l \in \Lambda} \langle k\rangle^{-s} \|\square_{k+l}^\ast \vec{g} \|_{L^{p^{\prime}}(\{A_Q^{-1}\})} \langle k \rangle^s \|\square_{k} \vec{f} \|_{L^p(\{A_Q\})}\\
   & \les  \|\vec{g}\|_{M^{-s}_{p^{\prime},q^{\prime}}(\{A_Q^{-1}\})} \|\vec{f}\|_{M^{s}_{p,q}(\{A_Q\})},
\end{align*}
where $\square^{\ast}_{k} =\cf \phi_k \cf^{-1}$. Since $[\mathcal{S}(\rn)]^m$ is dense in $M^{s}_{p,q}(W)$ (Lemma \ref{mwm-em3}), hence
\[\vec{g} \in (M^{s}_{p,q}(\{A_Q\}))^{\prime}, \ \|\vec{g}\|_{(M^{s}_{p,q}(\{A_Q\}))^{\prime}} \leq C\|\vec{g}\|_{M^{-s}_{p^{\prime},q^{\prime}}(\{A_Q^{-1}\})} .\]

We prove  $\big(M^{s}_{p,q}(\{A_Q\}) \big)^{\prime} \subset M^{-s}_{p^{\prime},q^{\prime}}(\{A_Q^{-1}\})$.  It is easy to see that, for $\{\vec{f_k}\}_{k \in \zn} \in M^{s}_{p,q}(\{A_Q\})$, the map: $\{\vec{f_k}\}_{k \in \zn} \mapsto \{\square_k \vec{f}\}_{k \in \zn} \in \ell^q_s(L^p(\{A_Q\}))$ is isometric from $M^{s}_{p,q}(\{A_Q\})$ into the subspace $X$ of $\ell^p_s(\zn,L^p(\{A_Q\}))$.
By Proposition \ref{wm-P1}, for all $\{\vec{f}_k\}_{k \in \zn} \in \ell^q_s(\zn, L^p(\{A_Q\}))$, we have
\[ \langle \{\vec{g_k}\}_{k \in \zn},\{\vec{f_k}\}_{k \in \zn} \rangle=\sum_{k \in \zn} \int_{\rn} \vec{g_k}(x) \vec{f_k}(x) {\rm d}x ,\]
where $\{\vec{g_k}\}_{k \in \zn} \in \ell^{q^{\prime}}_{-s} (\zn, L^{p^{\prime}}(\{A_Q^{-1}\}))$ and
\[ \|\{\vec{g_k}\}_{k \in \zn}\|_{(M^{s}_{p,q}(\{A_Q\}))^{\prime}} =  \big\| \{\vec{g}_k\}_{k \in \zn} \big\|_{\ell^{q^{\prime}} (L^{p^{\prime}}_{-s}(\{A_Q^{-1}\})} .\]
Since $\{ \square^{\ast}_k \phi\}_{k \in \zn} \in \ell^q_s (\zn, L^p(\{A_Q\}))$ for any $\varphi \in \cs(\rn)$, we have
\[ \langle \{\vec{g_k}\}_{k \in \zn},\{ \square^{\ast}_k \varphi\}_{k \in \zn} \rangle = \sum_{k \in \zn} \int_{\rn} \vec{g}_k(x)  \square^{\ast}_k \varphi(x) {\rm d}x = \int_{\rn} \sum_{k \in \zn}  \square_k \vec{g}_k(x) \varphi(x) {\rm d}x, \]
which implies $\vec{g}=\sum_{k \in \zn} \square_k \vec{g}_k(x)$. Therefore, by Theorem \ref{w-eq1}, we obtain
\[  \|\vec{g}\|_{M^{-s}_{p^{\prime},q^{\prime}}(\{A_Q^{-1}\})}  \lesssim \big\| \{\vec{g}_k\}_{k \in \zn} \big\|_{\ell^{q^{\prime}} (L^{p^{\prime}}_{-s}(\{A_Q^{-1}\})}  \lesssim \|\vec{g}\|_{(M^{s}_{p,q}(\{A_Q\}))^{\prime}} .  \]
This completes the proof.
\end{proof}

\section*{Competing Interests} The authors declare that there is no conflict of interest.

\end{document}